\def\thm@space@setup{%
  \thm@preskip=\parskip \thm@postskip=0pt
}
\newtheorem{Theorem}{Theorem}
\newtheorem{prp}{Proposition}
\newtheorem{Exa}{Example}
\newtheorem*{thm}{Theorem}
\author[Oussama Hamza]{Oussama Hamza}
\address{Department of Mathematics, Western University, London, Ontario, Canada N6A5B7}
\email{ohamza3@uwo.ca}
\title{On extensions of number fields with given quadratic algebras and cohomology}
\subjclass{12G10, 20J05, 20F05, 20F40, 17A45}
\keywords{Presentations of pro-$p$ groups, graded and filtered algebras, mild groups, pro-$p$ Right Angled Artin groups and algebras, prescribed ramification and splitting}
\thanks{I am grateful to Christian Maire, J{\'a}n Min{\'a}{\v c} and Thomas Weigel for careful review of this work. I am also thankful to Tatyana Barron, Elyes Boughattas, Mathieu Florence, Chris Hall, John Labute and Donghyeok Lim for discussions and encouragements.
Let me also thank Shubhankar Bhatt, Michal Cizek, Jarl G. Flaten Taxer\aa s, Tao Gong, Michael Rogelstad and Sayantan Roy Chowdhury for their interests in this work. Finally, I praise the anonymous referees for provided comments and careful readings.}
\newcommand{\Q}{\mathbb{Q}}
\newcommand{\F}{\mathbb{F}}
\newcommand{\Z}{\mathbb{Z}}
\newcommand{\NN}{\mathbb{N}}
\def\grad{{\rm Grad}}
\def\Gal{{\rm Gal}}
\def\cd{{\rm cd}}
\def\bA{{\mathbf{A}}}
\def\bB{{\mathbf{B}}}
\def\bE{{\mathbf{E}}}
\def\bN{{\mathbf{N}}}
\def\PP{{\mathcal P}}
\def\J{{\mathcal J}}
\def\E{{\mathcal E}}
\def\A{{\mathcal A}}
\def\ext{{\rm Ext}}
\def\I{{\mathcal I}}
\def\bi{{\mathbf i}}
\def\br{{\mathbf r}}
\def\bz{{\mathbf z}}
\def\bj{{\mathbf j}}
\def\bk{{\mathbf k}}
\def\bl{{\mathbf l}}
\def\bu{{\mathbf u}}
\def\bv{{\mathbf v}}
\def\br{{\mathbf r}}
\def\bz{{\mathbf z}}
\def\bo{{\mathbf o}}
\def\bq{{\mathbf q}}
\def\B{{\mathcal B}}
\def\CC{{\mathcal C}}
\begin{document}

\begin{abstract}
We introduce a criterion on the presentation of finitely presented pro-$p$ groups which allows us to compute their cohomology groups and infer non-analytic quotients, with cohomological dimension strictly larger than two, from (non-free) mild groups. 
We interpret these groups as Galois groups over~$p$-rational fields with prescribed ramification and splitting.
\end{abstract}

\maketitle

\section*{Introduction}

Presentations of (pro-$p$) groups via generators and relations have played an important role in the development of group theory (see \cite[Chapter~$2$]{lyndon1977combinatorial} and \cite{magnus2004combinatorial}), and more generally in the current theory of profinite groups and especially pro-$p$ groups. These methods, combined with cohomological results, are also used to detect Galois groups of~$p$-extensions, see for instance Koch~\cite{Koch}, Min{\'a}{\v{c}}-Rogelstad-T{\^a}n~\cite{minavc2016triple} and~\cite{minavc2020relations}, and Efrat-Quadrelli~\cite{efrat2019kummerian}. 

Shafarevich's great idea and insight was to present important Galois pro-$p$ groups via generators and relations, and to search for a numerical criterium, depending on the presentation, for proving that some of these groups are infinite. 
In the work with Golod~\cite{golod1964class}, he succeeded to make this idea precise, using associated filtrations and graded algebras techniques. Their numerical criterium was later refined to a famous Golod-Shafarevich criterium: if a pro-$p$ group admits~$d$ generators and~$r$ relations satisfying~$d^2\geq 4r$, then it is infinite (see \cite[Chapter IX]{CA}).

Around the same time, Lazard also inferred "l'Alternative des Gocha" (from the names of Golod and Shafarevich, see \cite[Appendice~$A.3$, Théorème~$3.11$]{LAZ}) which characterizes the topological structure of a pro-$p$ group from the gradation of its group algebra. In the early~$21^{\rm st}$ century, Labute-Min{\'a}{\v c}~\cite{Labute} and~\cite{Labute-Minac}, and Forré~\cite{FOR} used Anick's techniques~\cite{AN0} to define mild groups and infer FAB groups, i.e. groups such that every open subgroup has finite abelianization, of cohomological dimension~$2$.

In this paper, we construct non-analytic quotients of non-free mild groups of larger cohomological dimension by using and enriching previous techniques: presentations of pro-$p$ groups, projective resolutions, graded algebras, graph theory and Gröbner basis. From the "cutting tower" strategy introduced by Hajir-Maire-Ramakrishna~\cite{hajir2021cutting}, we conclude this article with arithmetical examples (see Theorem~\ref{prescribed cohomological dimension} below).

\subsection*{Arithmetic context}
Let~$p$ be a prime number and~$K$ be a~$p$-rational number field. The latter means that the Galois group~$G_K$, of the maximal pro-$p$ extension of~$K$ unramified outside~$p$, is isomorphic to a finitely generated free pro-$p$ group (see \cite[Théorème et Définition $2.1$]{nguyen1990arithmetique} for other equivalent properties). By a conjecture of Gras \cite[Conjecture~$8.11$]{gras2016theta}, if~$K$ is a fixed number field, then it is~$p$-rational for every prime~$p$ large enough.

Introduce~$T$ a finite set of finite primes of~$K$. Denote by~$G_K^T$ the Galois group of the maximal pro-$p$ extension of~$K$ unramified outside~$p$ and totally splitting in~$T$. We infer a free presentation~$G_K^T=G_K/R$, with~$R$ a normal closed subgroup of~$G_K$ presented by relations~$\{l_i\}_{i\in |T|}$. From the "cutting tower" strategy (see~\cite[Part~$2$]{split} or~\cite{hajir2021cutting})  based on the Chebotarev density Theorem, one can choose a set of primes~$T$ in~$K$ such that~$G_K^T$ has a \textit{mild} presentation (see \cite[Definition $1.1$]{Labute}), so cohomological dimension~$2$.  Mild groups play an important role in the understanding of Galois extensions with prescribed ramification and splitting (see \cite{Labute}, \cite{Labute-Minac} and \cite{schmidt2008uber}). 

Using the theory of Right Angled Artin Groups (RAAGs, see for instance \cite{bartholdi2020right}, \cite{wade2011lower} and \cite[Part~$2$]{lorensen2010groups}), we can construct quotients of~$G_K$ with prescribed cohomology. Let us fix~$\{x_1,\dots,x_d\}$ a minimal set of generators of~$G_K$ and an undirected graph~$\Gamma$ with set of vertices~$[\![1;d]\!]$. We define~$G(\Gamma)$ as a quotient of~$G_K$ by commutators~$[x_i;x_j]$ whenever~$\{i,j\}$ is an edge of~$\Gamma$. The dimension of the~$n$-th cohomology group of~$G(\Gamma)$ is given by~$c_n(\Gamma)$, the number of~$n$-cliques of~$\Gamma$: i.e.\ complete subgraphs of~$\Gamma$ with~$n$ vertices. 





In this work, we investigate quotients of mild groups with large finite cohomological dimension, using ideas introduced by RAAGs, and we realise them as absolute Galois groups with prescribed ramification and splitting.
Let~$G$ be a quotient of~$G_K$ and set~$h^n(G)$ to be the dimension of~$H^n(G;\F_p)$. 
We prove the following result:

  \begin{Theorem}\label{prescribed cohomological dimension}
Let~$\Gamma:=\Gamma_{\bA}\sqcup \Gamma_{\bB}$ be a graph where~$\Gamma_{\bA}$ is bipartite. 
Then, there exist a totally imaginary field~$K$ and a set~$T$ of primes in~$K$ such that~$G_K^T$ is (minimally) presented by generators $\{x_1;\dots;x_d\}$ indexed on the vertices of $\Gamma$ and  relations~$l_{\bA}:=\{l_{ij}; (i,j)\in \bA\}$ which, modulo the third Zassenhaus filtration of~$G_K$, satisfy the equality~$l_{ij}\equiv [x_i;x_j]$. In particular~$G_K^T$ is mild and non-free.
 
Furthermore, there exists a non-analytic quotient~$G$ of~$G_K^{T}$, such that for~$n\geq 2$, $h^n(G)=c_n(\Gamma)$. Consequently the cohomological dimension of~$G$ is equal to~$\max(2;n_{\Gamma_{\bB}})$, with~$n_{\Gamma_{\bB}}$ the clique number of~$\Gamma_{\bB}$. 
\end{Theorem}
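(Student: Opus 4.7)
The plan is to combine arithmetic choices of primes (via the cutting tower strategy) with the algebraic theory of pro-$p$ RAAGs and quadratic algebras. First I would start from a $p$-rational totally imaginary field $K$, so that $G_K$ is a free pro-$p$ group on a minimal generating set $x_1,\dots,x_d$ whose size can be made at least the number of vertices of $\Gamma$ (for instance by choosing $K$ inside an appropriate cyclotomic tower). For each edge $(i,j)\in\bA$, I would invoke the Chebotarev-based cutting tower strategy of Hajir-Maire-Ramakrishna~\cite{hajir2021cutting} to produce a prime $\q_{ij}$ of $K$ whose Frobenius data is chosen so that the local relation it imposes in $G_K$ is congruent to $[x_i;x_j]$ modulo the third Zassenhaus filtration. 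Setting $T=\{\q_{ij}:(i,j)\in\bA\}$ then yields the required presentation of $G_K^T$ by relations $l_{\bA}$.

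Next, to prove $G_K^T$ is mild, I would exploit the bipartite structure of $\Gamma_{\bA}$. Fixing a bipartition $V(\Gamma_{\bA})=V_1\sqcup V_2$, I would order the generators so that all indices in $V_1$ precede those of $V_2$. Then, for every edge $(i,j)\in\bA$, the leading monomial of the quadratic piece of $[x_i;x_j]$ is $x_ix_j$ with $i\in V_1$, $j\in V_2$; these monomials form a combinatorially free set in the free pro-$p$ algebra. By Anick's criterion in its Labute-Forré pro-$p$ version~\cite{AN0,Labute,FOR}, the commutators are strongly free, and since strong freeness is stable under perturbation by higher Zassenhaus terms, so are the actual relations $l_{\bA}$. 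Hence $G_K^T$ is mild of cohomological dimension two.

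For the second part, I would define $G$ to be the further quotient of $G_K^T$ by the closed normal subgroup generated by $\{[x_i;x_j] : (i,j)\in\bB\}$. The resulting pro-$p$ presentation of $G$ on $x_1,\dots,x_d$ has relations which, modulo the third Zassenhaus filtration, are exactly the commutators indexed by every edge of $\Gamma=\Gamma_{\bA}\sqcup\Gamma_{\bB}$. Its associated quadratic algebra is therefore the pro-$p$ RAAG algebra of $\Gamma$, whose degree-$n$ graded piece has dimension $c_n(\Gamma)$. I would then apply the cohomology criterion announced in the abstract to read off $h^n(G)$ from this quadratic algebra, obtaining $h^n(G)=c_n(\Gamma)$ for $n\geq 2$. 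Finally, since $\Gamma_{\bA}$ is bipartite (so $n_{\Gamma_{\bA}}\leq 2$) and disjoint from $\Gamma_{\bB}$, every clique of $\Gamma$ sits entirely in one of the two pieces, so $c_n(\Gamma)=0$ for $n>\max(2;n_{\Gamma_{\bB}})$ and is nonzero at that top degree, yielding the claimed cohomological dimension.

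The main obstacle will be the cohomology computation: checking that the paper's criterion applies to a presentation in which the $\bA$-relations are only commutators up to higher Zassenhaus terms (not on the nose), and verifying that adding the $\bB$-commutators does not spoil the strong freeness of the $\bA$-block. This last point is precisely where the disjointness of $\Gamma_{\bA}$ and $\Gamma_{\bB}$ enters: one wants the monomial Gröbner analysis of the combined relation set to decouple into the bipartite part and the RAAG part, so that the full quadratic algebra remains Koszul with the expected Hilbert series. By comparison, the arithmetic setup is relatively routine given the cutting tower toolbox.
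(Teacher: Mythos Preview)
Your proposal is correct and follows essentially the same route as the paper: build a $p$-rational totally imaginary $K$ with $G_K$ free on sufficiently many generators, use Chebotarev to realise the $\bA$-relations as Frobenii, verify mildness via the bipartite/combinatorially free criterion, pass to the quotient by the $\bB$-commutators, and then invoke Theorem~\ref{gradgroup} together with Proposition~\ref{compcoho} (Koszulity of $\E(\Gamma)$) to compute $H^\bullet(G)\simeq\A(\Gamma)$.

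Two small points of divergence are worth noting. First, the paper does not try to hit exactly $d$ generators: it starts from $k=\Q(\sqrt{-p})$, passes to an open subgroup to get $d'=1+|G_k:F|\geq d$ generators, and then \emph{adds further primes to $T$} whose Frobenii are $\equiv x_v\pmod{F_3}$ for the surplus indices $v\in[\![d+1;d']\!]$, thereby cutting $G_K^T$ down to a group genuinely presented on $x_1,\dots,x_d$ by $l_{\bA}$; your sketch leaves these extra generators floating, which is harmless for $h^n$ with $n\geq 2$ but does not quite match the stated presentation. Second, the algebra whose degree-$n$ piece has dimension $c_n(\Gamma)$ is the quadratic dual $\A(\Gamma)$, not $\E(\Gamma)$ itself; your argument uses this correctly in the end, but the sentence as written conflates the two.
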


Class Field Theory allows us to construct composita of $\Z_p$-extensions on~$K$ (see for instance \cite[Proposition $11.1.1$ and Theorem $11.1.2$]{NSW}). Alternatively, Schmidt's results also enable us to construct (non-analytic) quotients of absolute Galois groups with prescribed ramification which are mild. By applying \cite[Theorem $1.1$]{schmidt2008uber}, there exists a set of places $S$ of $K$ containing all places above~$p$ such that the absolute Galois group of $K$ unramified outside $S$ is mild and non-free. Then RAAG Theory provides us with extensions unramified outside~$p$ of cohomological dimension between three and the number of generators of~$G_K$. 

Theorem~\ref{prescribed cohomological dimension} strengthens the previous observation by constructing non-analytic extensions of $K$ that additionally totally splits in~$T$ and have cohomological dimensions close to the number of generators of~$G_K$ (see Corollary~\ref{final coro}). 

\subsection*{Cohomological results}
We first describe our main objects of study.

Let us denote by~$G$ a finitely presented pro-$p$ group with presentation~$G=F/R$, where~$F$ is a free pro-$p$ group with generators~$\{x_1,\dots,x_d\}$, and~$R$ is a normal closed subgroup of~$F$ generated by a finite family~$\{l_1,\dots, l_r\}$. We define~$E(G)$ as the completed group algebra of~$G$ over~$\F_p$. This is an augmented algebra, and we denote by~$E_n(G)$ the~$n$-th power of the augmentation ideal of~$E(G)$. Introduce 
  \begin{equation*}
  \begin{aligned}
\E_n(G)& :=  E_n(G)/E_{n+1}(G), \quad &\text{and}& \quad & \E(G) :=& \bigoplus_{n\in \NN}\E_n(G).
\end{aligned}
\end{equation*}

The graded algebra~$\E(G)$ plays a fundamental role in this article, and more generally in the understanding of filtrations (see \cite[Chapitre II and Appendice~$A.3$]{LAZ}, \cite{Labute}, \cite{Minac} and \cite{hamza2022isotypical}), topology (see \cite[Alternative des Gocha, Théorème~$3.11$, Appendice~$A.3$]{LAZ}) and cohomology (see \cite{Labute}, \cite{Labute-Minac}, \cite{minac2021koszul}, \cite{minavc2022mild}) of~$G$. Note that~$H^n(G;\F_p)$ is a discrete~$\F_p$-vector space, and denote by~$H^\bullet(G)$ the graded algebra~$\bigoplus_n H^n(G;\F_p)$ with product given by cup-product. We emphazise links between~$E(G)$,~$\E(G)$ and~$H^\bullet(G)$.

In \cite{BRU}, Brumer defined the functor~$\ext$ for compact modules, and showed that (\cite[Lemma~$4.2$]{BRU} and \cite[Part~$3.9$]{Koch}) we have an isomorphism of graded algebras~$H^\bullet(G)\simeq \ext^\bullet_{E(G)}(\F_p;\F_p)$, where the product is given by the cup-product. Furthermore, using May spectral sequence (see \cite[Theorem~$5.1.12$]{weigelspectralsequence}), we obtain an identification of~$H^\bullet(G)$ and~$\ext_{\E(G)}^\bullet(\F_p;\F_p)$ when~$\E(G)$ is Koszul, i.e. the trivial~$\E(G)$-module~$\F_p$ admits a free weighted-$\E(G)$ resolution~$(\PP_\bullet;\delta_\bullet)$, where~$\PP_i$ is generated by elements of degree~$i$ (we refer to \cite[Chapter~$2$]{polishchuk2005quadratic} for further references on Koszul algebra):
  \begin{prp}\label{compcoho}
If~$\E(G)$ is a Koszul algebra, then we have the following isomorphism of graded algebras:
$$H^\bullet(G)\simeq \ext_{\E(G)}^\bullet(\F_p;\F_p),$$
where the product is given by the cup-product. The algebra~$H^\bullet(G)$ is the quadratic dual of~$\E(G)$.
\end{prp}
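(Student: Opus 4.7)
The plan is to combine three ingredients: Brumer's identification of continuous cohomology with $\ext$ over the completed group algebra (already recalled), Weigel's May-type spectral sequence relating $\ext$ over~$E(G)$ to $\ext$ over the associated graded~$\E(G)$, and the defining property of a Koszul algebra.

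First I would start from the quoted isomorphism $H^\bullet(G)\simeq \ext^\bullet_{E(G)}(\F_p;\F_p)$, under which the cup product corresponds to the Yoneda product. The augmentation ideal filtration~$\{E_n(G)\}$ has~$\E(G)$ as its associated graded algebra, and by~\cite[Theorem~$5.1.12$]{weigelspectralsequence} it produces a convergent multiplicative spectral sequence of bigraded algebras whose $E_1$-term is $\ext^\bullet_{\E(G)}(\F_p;\F_p)$, abutting to $\ext^\bullet_{E(G)}(\F_p;\F_p)$, in which the cohomological degree and the internal grading inherited from~$\E(G)$ are tracked separately on each page.

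Next I would invoke the Koszul hypothesis. By definition it says that~$\F_p$ admits a free-$\E(G)$ resolution $(\PP_\bullet,\delta_\bullet)$ with~$\PP_n$ generated in internal degree~$n$; equivalently, $\ext^n_{\E(G)}(\F_p;\F_p)$ is concentrated in internal degree~$n$ for every~$n$. The~$E_1$-page of the above spectral sequence is therefore supported on a single diagonal, so all higher differentials~$d_r$, which would shift the cohomological and internal degrees by mismatched amounts, vanish for bidegree reasons. The spectral sequence degenerates at~$E_1$ and its filtration on the abutment is concentrated in a single weight in each total degree, yielding the desired isomorphism of graded algebras $H^\bullet(G)\simeq \ext^\bullet_{\E(G)}(\F_p;\F_p)$.

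Finally, for any Koszul quadratic algebra~$A$, the linear minimal free resolution of~$\F_p$ computes $\ext^\bullet_A(\F_p;\F_p)$ as the quadratic dual~$A^!$, a classical fact (see \cite[Chapter~$2$]{polishchuk2005quadratic}). Specializing to $A=\E(G)$ identifies $H^\bullet(G)$ with $\E(G)^!$. The delicate point of the argument is really the first step: one must ensure that the May spectral sequence carries its multiplicative structure all the way to~$E_\infty$, so that the degeneration transports the cup product and not merely the underlying vector spaces. This is precisely what Weigel's theorem guarantees, by arranging the differentials to be derivations for the pairings induced by the diagonal map on the filtered resolution.
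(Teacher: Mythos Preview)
Your argument is correct and follows essentially the same route as the paper's main proof: both invoke the May-type spectral sequence of \cite[Theorem~$5.1.12$]{weigelspectralsequence} (the paper reindexes via $\Delta_\bullet(G)$ so that the Koszul diagonal reads $t=-2s$), observe that Koszulity concentrates the $E_1$-page on a single diagonal so that every differential vanishes for bidegree reasons, and then read off the multiplicative isomorphism and the identification with the quadratic dual from the resulting degeneration. The paper also records, in a remark, an alternative elementary argument (via Serre's lemma \cite[Partie~$5$, Lemme~$2.1$]{LAZ}) that lifts a linear $\E(G)$-resolution to an $E(G)$-resolution and checks directly that the dual differentials are zero, though that second proof only gives the isomorphism of graded vector spaces.
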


Min{\'a}{\v{c}}-Pasini-Quadrelli-Tân already observed, in \cite[Proof of Theorem~$4.6$]{minavc2022mild}, that if~$G$ admits a presentation with quadratic relation, i.e.\ $l\subset F_2\setminus F_3$, which is additionally assumed to be mild, then~$\E(G)$ is Koszul. They also observed that if~$G$ is mild and~$H^\bullet(G)$ is quadratic, then~$H^\bullet(G)$ is the quadratic dual of~$\E(G)$. As a direct consequence of Proposition~\ref{compcoho}, we complete \cite[Theorem~$1.3$]{minavc2022mild}: if the group~$G$ admits a mild presentation with quadratic relations, then~$H^\bullet(G)$ is the quadratic dual of the Koszul algebra~$\E(G)$. 
For more details on quadratic duals, we refer to \cite[Part~$1.2$]{polishchuk2005quadratic}.



\subsection*{Computation of graded algebras}
Currently, the algebra~$\E(G)$ is only known when~$G$ is free, or mild, or in a few other specific cases (see \cite{Labute}, \cite{Labute-Minac} and \cite{Minac}). We give a criterion on the presentation of~$G$ which allows us to compute~$\E(G)$. As a consequence, we obtain the cohomology groups of a pro-$p$ group~$G$ (which will be here a quotient of a mild group) directly from its presentation. We are mostly inspired by the theory of RAAGs (see for instance \cite{bartholdi2020right} and \cite{wade2011lower}) and the work of Koch~\cite{koch1977pro} and Forré~\cite[Theorem~$3.7$]{FOR}. Let us now explain the strategy we adopt in this article to construct situations where~$\E(G)$ is Koszul. 

The Magnus isomorphism from~\cite[Chapitre II, Partie~$3$]{LAZ} gives us a surjection, that we denote by~$\phi$, between~$E(G)$ (resp.\ $\E(G)$) and the~$\F_p$-algebra of noncommutative series (resp.\ polynomials) over a set of variables~$\mathbf{X}:=\{X_1,\dots,X_d\}$, that we denote by~$E$ (resp.\ $\E$). In particular~$\E(G)$ is a quotient of~$\E$, and we denote by~$\I$ its kernel. It is in general difficult to explicitly compute the ideal~$\I$.

From the Magnus isomorphism, we write~$w_i:=\phi(l_i-1)$ as a sum of homogeneous polynomials in~$E$. A priori, every homogeneous polynomial in~$w_i$ plays a role in the computation of the ideal~$\I$. Labute~\cite{Labute} and Forré~\cite{FOR}, following ideas of Anick~\cite{AN0}, gave a criterion (mild presentation) on the presentation of~$G$ such that the ideal~$\I$ is generated only by the dominant term of~$w_i$. However, this criterion restricts the cohomological dimension of~$G$ to less than or equal to two. In this paper, we give another criterion, ensuring that~$\I$ is also generated by dominant terms of~$w_i$ and in addition to the mild case, we infer situations where the cohomological dimension is strictly larger than two.

Let~$\Gamma:=(\bN, \bE)$ be a graph with set of vertices~$\bN:=[\![1;d]\!]$ and set of edges~$\bE$. 
We introduce a set~$l_{\bE}:=\{l_{ij}\}_{\{i,j\} \in \bE}$ of relations in~$F$, and we state the following condition on the graph~$\Gamma$ and the family~$l_{\bE}$:
  \begin{equation}\label{condgradgroup}
\left\{   \begin{aligned}
\bullet &\text{The graph }\Gamma \text{ can be written as a disjoint union of two components }\\
& \text{that we call } \Gamma_{\bA} \text{ and } \Gamma_{\bB}, \text{ with sets of edges~$\bA$ and~$\bB$}.\\
\bullet & \text{The graph } \Gamma_{\bA}\text{ is bipartite, and } 
\\ & w_{ij}:=\phi(l_{ij}-1) \equiv [X_i;X_j] \pmod{E_3},\text{ for~$\{i,j\}\in \bA$}.\\
\bullet &\text{We have }  l_{uv}=[x_u;x_v], \text{ for~$\{u,v\}\in \bB$}.
\end{aligned} \right.
\end{equation}
Let us call~$\I(\Gamma)$ the ideal in~$\E$ generated by the family~$\{[X_i;X_j]\}_{\{i,j\}\in \bE}$, the grad of $w_{ij}$ when~$l_{\bE}$ satisfies the Condition \eqref{condgradgroup}, and call~$\E(\Gamma)$ the graded algebra~$\E(\Gamma):=\E/\I(\Gamma)$. We use ideas from Forré~\cite{FOR}, Wade~\cite{wade2011lower}, Labute-Min{\'a}{\v{c}}~\cite{Labute} and~\cite{Labute-Minac}, Min{\'a}{\v{c}}-Pasini-Quadrelli-T{\^a}n~\cite{minac2021koszul} and~\cite{minavc2022mild}, Anick~\cite{AN1} and Ufnarovskij~\cite{Ufnarovskij1995CombinatorialAA} to show that if~$G$ admits a presentation satisfying the Condition \eqref{condgradgroup}, we have~$\I=\I(\Gamma)$. Then we infer:
  \begin{Theorem}\label{gradgroup}
Assume that~$G$ is a finitely generated pro-$p$ group presented by relations~$l_{\bE}$ satisfying the Condition \eqref{condgradgroup}, then~$\E(G)=\E(\Gamma).$
\end{Theorem}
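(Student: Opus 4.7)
The plan is to prove the equality of ideals $\I = \I(\Gamma)$ in $\E$, where $\I$ is the kernel of the Magnus-induced surjection $\phi : \E \twoheadrightarrow \E(G)$. One inclusion, $\I(\Gamma) \subseteq \I$, is immediate from Condition \eqref{condgradgroup}: the initial form of $w_{ij} = \phi(l_{ij}-1)$ is $[X_i;X_j]$ for every $\{i,j\} \in \bE$ (literally for $\{u,v\} \in \bB$ via the Magnus expansion of a commutator, and modulo $E_3$ by hypothesis for $\{i,j\} \in \bA$), so each generator of $\I(\Gamma)$ lies in $\I$. Hence there is a canonical surjection of graded $\F_p$-algebras $\E(\Gamma) := \E/\I(\Gamma) \twoheadrightarrow \E(G)$. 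To upgrade this to an isomorphism, I would compare Hilbert series in each degree; equality of graded dimensions then forces the surjection to be an isomorphism.

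The Hilbert series of $\E(G)$ can be extracted from a strongly free presentation in the sense of Anick~\cite{AN1}. The criterion of Labute~\cite{Labute} and Forr\'e~\cite{FOR} asserts that if the initial forms of the $w_{ij}$ form a strongly free sequence in $\E$, then the $w_{ij}$ themselves form a strongly free sequence in $E$, and $\E(G)$ identifies with the quotient of $\E$ by the ideal generated by those initial forms. Thus the problem reduces to verifying that the commutators $\{[X_i;X_j]\}_{\{i,j\}\in \bE}$ constitute a strongly free sequence in $\E$.

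To verify this I would exploit both ingredients of Condition \eqref{condgradgroup}. Since $\Gamma_{\bA}$ is bipartite, say with vertex partition $V_1 \sqcup V_2$, orienting each edge from $V_1$ to $V_2$ gives a combinatorially free orientation in the sense of Forr\'e~\cite{FOR}; consequently the commutators indexed by $\bA$ form a strongly free sequence in the sub-algebra generated by the corresponding variables. For $\Gamma_{\bB}$ the quotient is the universal RAAG-algebra, whose Hilbert series (reciprocal of the clique polynomial of $\Gamma_{\bB}$) is classical. Because $\Gamma = \Gamma_{\bA} \sqcup \Gamma_{\bB}$ is a disjoint union of graphs, the variables attached to $\bN_{\bA}$ and $\bN_{\bB}$ remain free against each other in $\E(\Gamma)$, so, following Ufnarovskij's combinatorial framework~\cite{Ufnarovskij1995CombinatorialAA}, the Hilbert series of $\E(\Gamma)$ can be assembled from the two components and matches that produced by the strongly free calculation for $\E(G)$.

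The main obstacle is showing rigorously that the combined commutator sequence is strongly free in the whole algebra $\E$, and not merely as two separate sequences inside their respective sub-algebras. My plan is to fix a degree-lexicographic monomial order compatible with the bipartite orientation on $\Gamma_{\bA}$ and with any total order on $\bN_{\bB}$, describe the initial ideal of $\I(\Gamma)$ combinatorially via Anick chains, and check that no overlap obstructions occur between the $\bA$ and the $\bB$ relations. The absence of edges between $\Gamma_{\bA}$ and $\Gamma_{\bB}$ is crucial here: it forces every potential overlap between an $\bA$-commutator and a $\bB$-commutator to factor through a common variable, which cannot exist under the disjoint-union hypothesis. Once the initial ideals coincide with $\I(\Gamma)$, the Hilbert series coincide and Theorem~\ref{gradgroup} follows.
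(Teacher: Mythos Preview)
Your reduction to strong freeness is the essential gap. You write that the problem ``reduces to verifying that the commutators $\{[X_i;X_j]\}_{\{i,j\}\in \bE}$ constitute a strongly free sequence in $\E$,'' but this is false precisely in the situations the theorem is designed for. If the full family were strongly free, then $\E(\Gamma)$ would have Hilbert series $1/(1-dt+|\bE|t^2)$ and $G$ would be mild of cohomological dimension at most two. Yet the interesting case is when $\Gamma_{\bB}$ contains a triangle: then $c_3(\Gamma)>0$, the Hilbert series of $\E(\Gamma)$ is $1/\sum_k(-1)^kc_k(\Gamma)t^k$, and the $\bB$-commutators are \emph{not} strongly free (the leading monomials $X_uX_v$, $X_vX_w$, $X_uX_w$ overlap and produce a genuine syzygy). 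You even acknowledge the correct Hilbert series for $\E(\Gamma_{\bB})$ later, which is inconsistent with the strong-freeness claim. So the Labute--Forr\'e criterion does not apply to the full family, and you have no independent computation of the Hilbert series of $\E(G)$ to compare against.

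What you are missing is the asymmetry built into Condition~\eqref{condgradgroup}: for edges in $\bA$ one only knows $w_{ij}\equiv[X_i;X_j]\pmod{E_3}$, whereas for $\bB$ one requires $l_{uv}=[x_u;x_v]$ \emph{exactly}. The paper exploits this through Lemma~\ref{comequa}, which shows that every homogeneous component of $w_{uv}$ is a left multiple of $[X_u;X_v]$, so that $w_{uv}\in I(\Gamma)$ on the nose. This is what compensates for the failure of strong freeness on the $\bB$ side. The actual proof then runs a Forr\'e-style contradiction argument: take $f\in I$ of degree $n$, decompose it via $I=CWE+I^{n+1}$, and suppose the minimal degree $m_{\bA}$ of the $\bA$-contributions is less than $n$. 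Combinatorial freeness of $\widehat{w_{\bA}}$ forces a specific shape for the leading monomial of the $\bA$-part; the explicit form of $w_{uv}$ from Lemma~\ref{comequa} then forces an incompatible shape for the leading monomial of the cancelling $\bB$-part, using that the $\bA$- and $\bB$-variables are disjoint. The disjointness you invoke is indeed used, but only at this final monomial-comparison step, not to rule out overlaps in a Gr\"obner sense.
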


When~$\E(G)\simeq \E(\Gamma)$, we say that~$\E(G)$ is a Right Angled Artin Algebra (RAAAs). RAAAs play a fundamental role in geometric group theory (see for instance~\cite{bartholdi2020right}). In particular, since~$\E(\Gamma)$ is Koszul (see~\cite[Part~$4$]{bartholdi2020right}), we infer from Proposition \ref{compcoho}:
$$\ext_{\E(\Gamma)}^\bullet(\F_p;\F_p)\simeq \A(\Gamma),$$ 
where~$\A(\Gamma):=\E/\I^{!}(\Gamma)$, with~$\I^{!}(\Gamma)$ the two sided ideal of~$\E$ generated by the family 
  \begin{itemize}
\item~$X_iX_j$ when~$\{i,j\} \notin \mathbf{E}$,
\item~$X_u^2$ for~$u\in [\![1;d]\!]$,
\item~$X_uX_v+X_vX_u$ for~$u,v$ in~$[\![1;d]\!]$.
\end{itemize}
Observe that~$\dim_{\F_p}\A_n(\Gamma)=c_n(\Gamma)$, where~$c_n(\Gamma)$ is the number of~$n$-cliques of~$\Gamma$, i.e.\ complete subgraphs of~$\Gamma$ with~$n$ vertices. Since~$\E(\Gamma)$ is a Koszul algebra, we can apply Proposition~\ref{compcoho} and we infer that~$$H^\bullet(G)\simeq \A(\Gamma), \quad \text{and} \quad h^n(G):=\dim_{\F_p}H^n(G)=c_n(\Gamma).$$  

\subsection*{Outline}
We begin with Part~\ref{Part0}, where we give some backgrounds on Right Angled Artin Algebras (that we denote RAAA). Then we 
prove Theorem~\ref{gradgroup} in Part~\ref{Part1}. We finish by Part~\ref{lastpart}, where we first prove Proposition~\ref{compcoho}, then we compute the algebras~$\E(G)$ and~$H^\bullet(G)$ when~$G$ is free, mild quadratic and pro-$p$ RAAG. We conclude Part~\ref{lastpart} with the proof of Theorem~\ref{prescribed cohomological dimension}, which follows from Theorem~\ref{gradgroup} and Proposition~\ref{compcoho}.

\subsection*{Notation}
We introduce here some general notations:


$\bullet$ We recall that~$G$ is a finitely presented pro-$p$ groups with generators~$\{x_1;\dots;x_d\}$ and relations~$\{l_1;\dots;l_r\}$.

$\bullet$ If~$x,y$ are elements in~$G$ (or in~$F$), we denote by~$[x,y]:=x^{-1}y^{-1}xy$.

$\bullet$ We define~$H^n(G;\F_p)$ the~$n$-th (continuous) cohomology group of the trivial (continuous)~$G$-module~$\F_p$. The cohomological dimension of~$G$ is the integer~$n$ (which can be infinite) such that for every~$m>n$ we have~$H^m(G;\F_p)=0$.

$\bullet$ The Magnus isomorphism from~\cite[Chapitre II, Partie~$3$]{LAZ} gives us the following identification of~$\F_p$-algebras between~$E(F)$ and the noncommutative series over~$\F_p$ on~$\{X_1;\dots;X_d\}$ that we call~$E$: 
  \begin{equation}\label{Magnus iso}
\phi \colon E(F) \simeq E; \quad x_j \mapsto X_j+1.
\end{equation}
The algebra~$E$ is filtered by~$\{E_n\}_{n\in \NN}$, the~$n$-th power of the augmentation ideal, and we denote by~$F_n:=\{f\in F; \phi(f-1)\in E_n\}$ the Zassenhaus filtration of~$F$.

$\bullet$ Denote by~$I$ the closed two-sided ideal in~$E$ generated by~$w_i:=\phi(l_i-1)$, this is an algebra with a filtration given by~$\{I_{n}:=I\cap E_{n}\}_{n\in \NN}$. From the Magnus isomorphism \eqref{Magnus iso}, we identify the filtered algebra~$E(G)$ with the quotient algebra~$E/I$: this is a filtered algebra and we denote its filtration by~$\{E_{n}(G)\}_{n\in \NN}$. Let us define:
$$\E_n(G):=E_n(G)/E_{n+1}(G), \quad \text{and} \quad \E(G):=\bigoplus_n \E_n(G).$$ 

$\bullet$ We introduce the functor~$\grad$ (see for instance~\cite[Chapitre I]{LAZ}) from the category of compact~$\F_p$-vector spaces (or compact~$E(G)$-modules) to graded~$\F_p$-vector spaces (or graded~$\E(G)$-modules). This is an exact functor. For instance, if we denote by~$\E$ the noncommutative polynomials over~$\F_p$ on~$\{X_1;\dots;X_d\}$, and~$\E_n:=E_n/E_{n+1}$, we have
$$\grad(E):=\bigoplus_{n\in \NN}\E_{n}=\E.~$$

$\bullet$ Let us define~$\I:=\grad(I)=\bigoplus_n I_n/I_{n+1}$. Observe by~\cite[$(2.3.8.2),$ Chapitre I]{LAZ} that the functor~$\grad$ is exact, so from the Magnus isomorphism, we can identify~$\E(G)$ with the graded algebra~$\grad(E(G))\simeq \E/\I$, and we denote its gradation by~$\{\E_n(G)\}_{n\in \NN}$. We define the gocha series of~$G$ by:
  \begin{equation*}
  \begin{aligned}
&gocha(G,t) := & \sum_{n=0}^\infty c_n t^n, \quad &\text{where}& \quad c_n&:= \dim_{\F_p} \E_n(G)
\end{aligned}
\end{equation*}

$\bullet$ An~$\F_p$-basis on~$E$ and~$\E$ is given by monomials on the set of variables~$\mathbf{X}:=\{X_1;\dots;X_d\}$. The order~$X_1>X_2>\dots>X_d$ induces a lexicographic order on monomials on~$\mathbf{X}$, that we denote by~$>$. We say that a monomial~$X$ contains a monomial~$Y$ if there exist monomials~$M$ and~$N$ such that~$X=MYN$.
\\Recall that we write commutators of~$X_i$ and~$X_j$ (in~$E$ or~$\E$) as: 
$$[X_i;X_j]:=X_iX_j-X_jX_i \text{ for } \{i,j\}\in \bE.$$

$\bullet$ If~$z$ is an element in~$E$, we denote by~$\deg(z)$ the integer such that~$z\in E_{\deg(z)}\setminus E_{\deg(z)+1}$. Then we define~$\overline{z}$ the image of~$z$ in~$E_{\deg(z)}/E_{\deg(z)+1}$, this is a homogeneous polynomial, and we denote its degree by~$\deg(z)$. We call~$\widehat{z}$ the leading monomial of~$z$. For instance~$\widehat{[X_i;X_j]}=X_iX_j$.

$\bullet$ We say that~$G$ has a \textit{mild} presentation if the $\E/\I(\overline{w})$-module $\I(\overline{w})/\I(\overline{w})\I_{\E}$ is free on the family $\{\overline{w}_j\}_{j=1}^r$, where $\I_E$ (resp. $\I(\overline{w})$) is the augmentation ideal of $\E$, i.e. the two-sided ideal of $\E$ generated by the family $\{X_i\}_{i=1}^d$ (resp. the family $\{\overline{w_j}\}_{j=1}^r$).
\\The group~$G$ has a quadratic presentation if for every integer~$j$,~$\deg(w_j)=2$.

$\bullet$ We say that the algebra~$\E(G)$ is Koszul, if the trivial~$\E(G)$-module~$\F_p$ admits a (weighted) free linear resolution~$(\PP_{\bullet},\delta_{\bullet})$, i.e.~$\PP_i$ is a free-$\E(G)$-module generated by elements of degree~$i$ (see for instance~\cite[Chapter~$2$]{polishchuk2005quadratic}). 


\section{Preliminaries on Right Angled Artin Algebras (RAAA)}\label{Part0}
Recall that we denote by~$\Gamma:=(\bN, \bE)$ an undirected graph, where~$\bN:=[\![1;\dots d]\!]$. 
For every integer~$n$, we denote by~$c_n(\Gamma)$ the number of~$n$-cliques of~$\Gamma$. Let~$\I(\Gamma)$ (resp.\ $I(\Gamma)$) be the closed two sided ideal of~$\E$ (resp.\ $E$) generated by the family~$\{ [X_i;X_j]\}_{\{i,j\}\in \bE}$ and~$\E(\Gamma):=\E/\I(\Gamma)$ (resp.\ $E(\Gamma):=E/I(\Gamma)$).

We take the following orientation on~$\Gamma$, that we call \textit{standard}: if~$(i,j)\in \bE$ then~$i<j$. For more references on RAAAs, let us quote~\cite{bartholdi2020right}.

\subsection{Introductory results on graphs}
Let us begin with few results on graphs. I am thankful to Chris Hall for the following Lemma. We refer to~\cite[Chapters~$1$ and~$5$]{diestel2005graph} for a general introduction on graph theory.

\begin{lemm}\label{bipartite orientation1}
The undirected graph~$\Gamma$ is bipartite if and only if there exists an orientation on~$\Gamma$ such that the tail of an edge of~$\Gamma$ is not the head of another one. 
\end{lemm}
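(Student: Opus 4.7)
The plan is to unpack the orientation condition as saying that every non-isolated vertex is either a pure source (only outgoing edges) or a pure sink (only incoming edges), and then observe that this is precisely the defining partition of a bipartite graph.

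For the forward implication, I start from a bipartition $\bN = A \sqcup B$ witnessing that $\Gamma$ is bipartite, so every edge has one endpoint in $A$ and one in $B$. I orient each edge from its endpoint in $A$ towards its endpoint in $B$. Then every vertex of $A$ appears only as a tail, and every vertex of $B$ appears only as a head, so no vertex is simultaneously a tail of one edge and a head of another. This gives the required orientation.

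For the converse, starting from an orientation satisfying the hypothesis, I define $B$ to be the set of vertices that occur as the tail of at least one edge, and $A$ to be the set of remaining vertices (this includes all heads and all isolated vertices). By hypothesis, no element of $B$ occurs as a head of any edge, so every edge has its head in $A$ and its tail in $B$. Hence $(A,B)$ is a bipartition witnessing that $\Gamma$ is bipartite.

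Both directions are essentially bookkeeping, so there is no serious obstacle; the only mild subtlety is handling isolated vertices correctly (they can be assigned to either side of the bipartition without affecting the argument), and being clear that the hypothesis \emph{no tail is a head} is interpreted vertex-wise, not edge-wise.
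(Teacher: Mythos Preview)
Your proof is correct and follows essentially the same approach as the paper: the paper phrases the bipartition as a $2$-coloring (black/white), orients edges from one color class to the other, and for the converse colors vertices according to whether they occur as heads or as tails. Your extra care with isolated vertices is a nice touch that the paper leaves implicit.
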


\begin{proof}
The undirected graph~$\Gamma$ is bipartite if and only if it is~$2$-colored (let us call these colors black and white). For instance, we can direct edges from white vertices to black vertices. 
\\Conversely, we can give a~$2$-coloring on a graph~$\Gamma$ where the tail of an edge is not the head of another one. We endow heads of edges with white color and tails of edges with black color.
\end{proof}

\begin{rema}[Bipartite graphs, orientation and combinatorially free families]\label{bipartite orientation2}
Up to relabelling vertices, we assume that bipartite graphs satisfy the property given in Lemma~\ref{bipartite orientation1} for the standard orientation. Equivalently, if~$\Gamma$ is bipartite, then the family~$\{X_iX_j\}_{(i,j)\in \bE}$ is combinatorially free in~$E$ (recall that we took the order on monomials induced by~$X_1>X_2>\dots>X_d$), i.e. for every different cuples~$(i_1,j_1)$ and~$(i_2,j_2)$ in~$\bE$, we have~$j_1\neq i_2$. 
\end{rema}

From now, we denote by~$\{i,j\}$ edges from the undirected graph~$\Gamma$ and~$(i,j)$ edges of the graph~$\Gamma$ endowed with its standard orientation. Of course, we only discuss cliques of the undirected graph~$\Gamma$. Let us give an example:

\begin{exem}
Consider~$\Gamma$ the graph with three vertices~$\{1;2;3\}$ and two edges~$\{\{1,2\}; \{1,3\}\}$. The graph~$\Gamma$ is bipartite and we have the following representation:

\centering{
\begin{tikzcd}
                                         & 3                   \\
1 \arrow[ru, no head] \arrow[r, no head] & 2
\end{tikzcd}
, or with standard orientation:
\begin{tikzcd}
                       & 3 \\
1 \arrow[ru] \arrow[r] & 2 
\end{tikzcd}.}
\end{exem}

\subsection{Gradation and RAAAs}
Let us begin with some introductory results on the functor~$\grad$ (for more references, see~\cite[Chapitre I]{LAZ}). We first show that the functor~$\grad$ sends homogeneous ideals (i.e. ideals generated by homogeneous polynomials) in~$E$ to homogenous ideals in~$\E$. 

Observe that~$E(\Gamma)$ is an augmented algebra, so filtered by powers of the augmentation ideal.

  \begin{lemm}[Gradation of~$E(\Gamma)$]\label{monoappro}
We have~$\grad(E(\Gamma))=\E(\Gamma)$.
\end{lemm}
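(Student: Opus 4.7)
My strategy is to apply the exact functor $\grad$ from \cite[Chapitre I]{LAZ} to the short exact sequence of compact $\F_p$-vector spaces
$$0 \longrightarrow I(\Gamma) \longrightarrow E \longrightarrow E(\Gamma) \longrightarrow 0,$$
which yields
$$0 \longrightarrow \grad(I(\Gamma)) \longrightarrow \E \longrightarrow \grad(E(\Gamma)) \longrightarrow 0,$$
and hence $\grad(E(\Gamma)) \simeq \E/\grad(I(\Gamma))$. It therefore suffices to identify $\grad(I(\Gamma))$ with $\I(\Gamma)$, which I will do by proving both inclusions.

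For the inclusion $\I(\Gamma) \subseteq \grad(I(\Gamma))$, note that each generator $[X_i; X_j]$ with $\{i,j\} \in \bE$ is already homogeneous of degree $2$, hence sits in $I(\Gamma) \cap E_2$ and reduces in $E_2/E_3$ to the homogeneous polynomial $[X_i; X_j] \in \E_2$. More generally, for any $\alpha \in \E_m$ and $\beta \in \E_l$ with homogeneous lifts $\tilde\alpha \in E_m$ and $\tilde\beta \in E_l$, the product $\tilde\alpha [X_i; X_j] \tilde\beta$ lies in $I(\Gamma) \cap E_{m+l+2}$ and reduces modulo $E_{m+l+3}$ to $\alpha [X_i; X_j] \beta$. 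Since such elements linearly span $\I(\Gamma)$, we obtain the inclusion.

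For the reverse inclusion $\grad(I(\Gamma)) \subseteq \I(\Gamma)$, fix $f \in I_n := I(\Gamma) \cap E_n$ and write $\bar f \in \E_n$ for its image. Since $I(\Gamma)$ is the closure of the algebraic two-sided ideal generated by $\{[X_i; X_j]\}_{\{i,j\} \in \bE}$, I may approximate $f$ by a finite combination $g = \sum_{k} a_k [X_{i_k}; X_{j_k}] b_k$ with $f - g \in E_{n+1}$. Decomposing $a_k = \sum_m a_k^{(m)}$ and $b_k = \sum_l b_k^{(l)}$ into homogeneous components and extracting the degree $n$ part gives
$$\bar g = \sum_{k,\, m+l = n-2} a_k^{(m)} [X_{i_k}; X_{j_k}] b_k^{(l)} \in \I(\Gamma) \cap \E_n,$$
while $\bar f = \bar g$ in $\E_n$ by the choice of $g$. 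Hence $\bar f \in \I(\Gamma)$.

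The main subtle point is the approximation step in the last paragraph: one has to guarantee that, inside the complete filtered algebra $E$, the degree $n$ component of any element of the closed ideal $I(\Gamma)$ can be realized by truncating some \emph{finite} linear combination of the commutators $[X_i; X_j]$ modulo $E_{n+1}$. This rests on the homogeneity of the generators together with the completeness and separation of the filtration $\{E_n\}_n$, and is the only place where anything beyond formal manipulation is required.
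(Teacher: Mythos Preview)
Your proof is correct and follows essentially the same route as the paper: reduce to showing $\grad(I(\Gamma))=\I(\Gamma)$, then verify both inclusions by decomposing the side coefficients $a_k,b_k$ into homogeneous parts and reading off the degree-$n$ component. The paper phrases the reverse inclusion slightly differently---writing a general $z\in I(\Gamma)$ directly as a (possibly infinite) sum $\sum a_{ijul}[X_i;X_j]b_{iju}$ rather than first approximating by a finite sum modulo $E_{n+1}$---but this is the same argument, and your explicit handling of the closure/approximation step is if anything more careful.
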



  \begin{proof}
We just need to show that~$\grad(I(\Gamma))=\I(\Gamma)$. We always have~$\I(\Gamma) \hookrightarrow \grad(I(\Gamma))$. Let us show the reverse inclusion.

Take~$z\in I(\Gamma)$, and write~$z:=\sum_{ijul} a_{ijul}[X_i;X_j] b_{iju}$, where~$a,b\in E$. 
Let us express~$z$ as a (possibly infinite) sum of homogeneous polynomials:
$$a_{ijul}:=\sum_{g\in \NN}{}_ga_{ijul}, \quad \text{and} \quad b_{iju}:=\sum_{h\in \NN}{}_hb_{iju},$$
where~${}_ga_{ijul}$ and~${}_hb_{iju}$ are homogeneous polynomials of degree~$g$ and~$h$.
Therefore, we have the following (possibly infinite) sum of homogeneous polynomials:
$$z=\sum_{n\in \NN}\sum_{ijul}\sum_{g+h+2=n} ({}_{g}a_{ijul}) [X_i;X_j] ({}_{h}b_{iju}).$$
So, if~$\deg(z)=n$, we infer:
$$\overline{z}=\sum_{ijul}\sum_{g+h+2=n} ({}_{g}a_{ijul}) [X_i;X_j] ({}_{h}b_{iju})\in \I(\Gamma).$$
Therefore~$\grad(I(\Gamma))=\I(\Gamma)$ is a homogeneous ideal.
\end{proof}

  \begin{rema}
Lemma~\ref{monoappro} is still true if we take~$I$ a two-sided ideal in~$E$ generated by homogeneous elements~$w_u$ (which can be seen both in~$E$ and~$\E$). More precisely,~$\grad(I)$ will also be generated by~$w_u$ as a two-sided ideal in~$\E$.
\end{rema}

Recall, from the Condition~\eqref{condgradgroup}, that we defined~$w_{uv}:=\phi([x_u;x_v]-1)$ in~$E$. We compute here the homogeneous polynomials occurring in the expression of~$w_{uv}$.
  \begin{lemm}\label{comequa}
We have the following equality:
  \begin{equation*}
w_{uv}=\left(\sum_{n\in \NN}(-1)^n \sum_{k=0}^n P_{n,k}(X_u;X_v)\right)[X_u;X_v], \quad \text{where} \quad P_{n,k}(X_u;X_v)=X_u^kX_v^{n-k}.
\end{equation*}
\end{lemm}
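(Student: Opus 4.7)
The plan is a direct computation: unpack $w_{uv} = \phi([x_u;x_v]-1)$ using the Magnus isomorphism~\eqref{Magnus iso} and identify the geometric series that falls out. Set $A := X_u$ and $B := X_v$ for brevity, so that $\phi(x_u) = 1 + A$ and $\phi(x_v) = 1 + B$, and in the completed algebra $E$ we have
\[
\phi(x_u^{-1}) = (1+A)^{-1} = \sum_{m\geq 0}(-A)^m, \qquad \phi(x_v^{-1}) = (1+B)^{-1} = \sum_{l\geq 0}(-B)^l.
\]

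The key observation is the factorization identity $(1+A)(1+B) = (1+B)(1+A) + [A;B]$, which is immediate since $(1+A)(1+B) - (1+B)(1+A) = AB - BA$. Multiplying on the left by $(1+A)^{-1}(1+B)^{-1}$ yields
\[
\phi([x_u;x_v]) = (1+A)^{-1}(1+B)^{-1}(1+A)(1+B) = 1 + (1+A)^{-1}(1+B)^{-1}[A;B],
\]
so $w_{uv} = (1+A)^{-1}(1+B)^{-1}[X_u;X_v]$.

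It remains to expand the product of the two geometric series. Writing
\[
(1+A)^{-1}(1+B)^{-1} = \sum_{m,l\geq 0}(-1)^{m+l}A^m B^l
\]
and collecting terms of total degree $n = m + l$ (with $k=m$) gives
\[
(1+A)^{-1}(1+B)^{-1} = \sum_{n\geq 0}(-1)^n \sum_{k=0}^n X_u^k X_v^{n-k} = \sum_{n\in\NN}(-1)^n \sum_{k=0}^n P_{n,k}(X_u;X_v),
\]
which substituted back yields the claimed formula. There is no real obstacle: the whole argument is a one-line algebraic identity once one notices the commutator factorization, and convergence of the series is automatic in the $E_n$-adic topology since $[X_u;X_v] \in E_2$.
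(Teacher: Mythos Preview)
Your proof is correct and in fact cleaner than the paper's. You spot the one-line identity $(1+A)(1+B)=(1+B)(1+A)+[A;B]$, which after left-multiplication by $(1+A)^{-1}(1+B)^{-1}$ immediately yields the factored form $w_{uv}=(1+X_u)^{-1}(1+X_v)^{-1}[X_u;X_v]$; the claimed series then drops out from the Cauchy product of the two geometric series. The paper takes a more computational route: it writes $Z:=(1+X_u)^{-1}(1+X_v)^{-1}-1$ directly as the series $\sum_{n\geq 1}(-1)^n P_n$, expands $w_{uv}=(1+Z)(1+X_u+X_v+X_uX_v)-1$, and then verifies degree-by-degree that each homogeneous piece $w_{uv,n}$ equals $P_{n-2}(X_u;X_v)[X_u;X_v]$ using recursive identities for the polynomials $P_n$. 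Your approach avoids this induction entirely by recognising the commutator factorisation up front; what the paper's approach buys is an explicit isolation of each homogeneous component $w_{uv,n}$, which is perhaps marginally more transparent for the later Gr\"obner-basis analysis, but the end result is identical.
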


  \begin{proof}
For every integer~$n$, we introduce the homogeneous polynomial of degree~$n$:~$P_n(X_u;X_v):=(-1)^n\sum_{k=0}^nP_{n,k}(X_u;X_v)\in E_n$. Let us observe that~$P_n$ satisfies the following equalities:
  \begin{equation} \tag{$\ast$} \label{(ast)}
  \begin{aligned}
P_n(X_u;X_v)&= X_u^n+P_{n-1}(X_u;X_v)X_v=X_v^n+P_{n-1}(X_u;X_v)X_u
\\&= X_u^n+X_v^n+P_{n-2}(X_u;X_v)X_uX_v.  
\end{aligned}
\end{equation}

Now, let us compute~$w_{uv}$. For this purpose, we introduce the series~$Z:=\sum_{n=1}^\infty (-1)^n P_n(X_u;X_v)$, and we infer:
  \begin{equation} \tag{$\ast\ast$} \label{(astast)} 
  \begin{aligned}
w_{uv}&= (1+X_u)^{-1}(1+X_v)^{-1}(1+X_u)(1+X_v)-1
 \\&= (1+Z)(1+X_u+X_v+X_uX_v)-1. 
\\&= X_u+X_v+X_uX_v+Z+Z(X_u+X_v)+ZX_uX_v. 
\end{aligned}
\end{equation}

Let us denote by~$w_{uv,n}$ the term (homogeneous polynomial) of degree~$n$ in~$w_{uv}$, i.e.\ $w_{uv}:=\sum_{n=1}^\infty w_{uv,n}$. Observe that:
$$w_{uv,1}=0, \quad \text{and} \quad w_{uv,2}=[X_u;X_v].$$
For~$n\geq 3$, we obtain from~\eqref{(astast)}:
$$w_{uv,n}=(-1)^n\left[ P_n(X_u;X_v)-P_{n-1}(X_u;X_v)(X_u+X_v)+P_{n-2}(X_u;X_v)X_uX_v \right].$$
We conclude by applying relations given in~\eqref{(ast)}.
\end{proof}

  \begin{prop}\label{commideals}
Denote by~$\Delta$ the ideal in~$E$ generated by~$\{w_{uv}:=\phi([x_u;x_v]-1); (u,v)\in \bE\}$. Then~$\Delta=I(\Gamma)$ and~$\grad(\Delta)=\I(\Gamma)$.
\end{prop}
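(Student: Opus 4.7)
The strategy is to read off the factorization provided by Lemma~\ref{comequa} and exploit that the non-commutator factor is a unit in the completed algebra~$E$.

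First, I would expand the series~$S_{uv} := \sum_{n\in \NN}(-1)^n \sum_{k=0}^n P_{n,k}(X_u;X_v)$ in low degrees: the~$n=0$ term contributes~$1$ and the~$n=1$ term contributes~$-(X_u+X_v)$, so~$S_{uv} = 1 - (X_u+X_v) + \cdots$ has constant term~$1$. Since~$E = \F_p\langle\langle \mathbf{X}\rangle\rangle$ is the completed noncommutative polynomial ring, every element of~$E$ with nonzero constant term is invertible. Hence~$S_{uv}$ is a unit in~$E$ for each~$(u,v)\in\bE$.

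Next, invoking Lemma~\ref{comequa}, I write~$w_{uv} = S_{uv}\,[X_u;X_v]$. The inclusion~$\Delta \subseteq I(\Gamma)$ is immediate since each generator~$w_{uv}$ is a right multiple of the generator~$[X_u;X_v]$ of~$I(\Gamma)$. For the reverse inclusion, the factorization gives~$[X_u;X_v] = S_{uv}^{-1} w_{uv} \in \Delta$ for every~$(u,v)\in\bE$; hence~$I(\Gamma) \subseteq \Delta$. Combining both inclusions yields the first assertion~$\Delta = I(\Gamma)$.

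For the graded statement, the equality~$\grad(\Delta) = \grad(I(\Gamma))$ follows from~$\Delta = I(\Gamma)$, and the remark following Lemma~\ref{monoappro} (applied to the homogeneous generators~$[X_i;X_j]$) shows~$\grad(I(\Gamma)) = \I(\Gamma)$. This yields~$\grad(\Delta) = \I(\Gamma)$, completing the proof. I do not anticipate any obstacle: the entire argument reduces to the observation that the prefactor in Lemma~\ref{comequa} is a unit, so no Gr\"obner basis or combinatorial input is required here.
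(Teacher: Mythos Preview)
Your argument is correct and, in fact, more direct than the paper's. Both proofs begin with Lemma~\ref{comequa} to obtain~$w_{uv}=S_{uv}[X_u;X_v]$, and both read off~$\Delta\subset I(\Gamma)$ from this. The divergence is in the reverse inclusion: you observe that~$S_{uv}$ has constant term~$1$ and is therefore a unit in the complete ring~$E$, giving~$[X_u;X_v]=S_{uv}^{-1}w_{uv}\in\Delta$ immediately. The paper instead argues at the graded level first---noting that~$\overline{w_{uv}}=[X_u;X_v]$ forces~$\I(\Gamma)\subset\grad(\Delta)$, combining this with Lemma~\ref{monoappro} to get~$\grad(\Delta)=\grad(I(\Gamma))=\I(\Gamma)$---and then invokes Lazard's reflection principle~\cite[Corollaire~$(2.3.15)$, Chapitre~I]{LAZ} to lift the graded equality~$\grad(\Delta)=\grad(I(\Gamma))$ back to~$\Delta=I(\Gamma)$. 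Your route bypasses this appeal to Lazard entirely; the paper's route, on the other hand, illustrates the general filtered-to-graded-and-back technique that is used elsewhere in the article. For the graded assertion your final step (applying Lemma~\ref{monoappro} to the homogeneous generators~$[X_i;X_j]$) is exactly what the paper does.
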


  \begin{proof}
From Lemma \ref{comequa}, we can write $w_{uv}:=P_{uv}[X_u;X_v]$, where
$$P_{uv}:=\sum_{n\in \NN}(-1)^n \sum_{k=0}^n X_u^kX_v^{n-k} \text{ is invertible in } E.$$ Then $\Delta=I(\Gamma)$, and thus from Lemma \ref{monoappro}, we conclude that~$\grad(\Delta)=\grad(I(\Gamma)):=\I(\Gamma)$.  
\end{proof}


\section{Proof of Theorem~\ref{gradgroup}}\label{Part1}
The goal of this part is to compute~$\E(G)$, when~$G$ is presented by a family of relations~$l_{\bE}$ coming from a graph~$\Gamma$, endowed with standard orientation (see Remark~\ref{bipartite orientation1}), satisfying the Condition~$\eqref{condgradgroup}$. 
  \begin{theo}
Assume that~$G$ admits a presentation with relation~$l_{\bA \cup \bB}$ satisfying~\eqref{condgradgroup}. Then~$\E(G)= \E(\Gamma)$.
\end{theo}
We show that~$\I=\I(\Gamma)$. We split the proof into several steps. Using the proof of~\cite[Theorem~$3.7$]{FOR} we give Equalities~\eqref{decomposition} and~\eqref{homogenes} in subpart~\ref{firststep}. This allows us to express elements in~$I$ modulo~$E_{n+1}$ for every integer~$n$. The rest of the proof is done by contradiction.

In subpart~\ref{secondstep}, we infer Equalities~\eqref{mu equality} and \eqref{expression muA} from monomial analysis (Gröbner basis, see~\cite{Ufnarovskij1995CombinatorialAA}) and the fact that~$\widehat{w_{\bA}}:=\{X_iX_j\}_{(i,j)\in \bA}$ is combinatorially free. In subpart~\ref{thirdstep}, we show Equality \eqref{expression muB} from~$l_{uv}:=[x_u;x_v]$, Lemma~\ref{comequa} and Gröbner basis arguments. We finish the proof with subpart~\ref{laststep}, where we conclude that the contributions given by the homogeneous polynomials in the expressions of~$w_{ij}$ and~$w_{uv}$, for the computation of~$\I$, only come from the dominant terms. So we conclude~$\I=\I(\Gamma)$.

Recall that~$\widehat{w_{\bA}}:=\{X_iX_j\}_{(i,j)\in \bA}$ and~$\widehat{w_{\bB}}:=\{X_uX_v\}_{(u,v)\in \bB}$.
We introduce~$\I_{\bA}$ and~$\I_{\bB}$ the ideals in~$\E$ generated by~$\overline{w_{\bA}}:=\{[X_i;X_j]\}_{(i,j)\in \bA}$ and~$\overline{w_{\bB}}:=\{[X_u;X_v]\}_{(u,v)\in \bB}$. We denote by~$\widehat{\I(\Gamma)}$ (resp.~$\widehat{\I_{\bA}}$,~$\widehat{\I_{\bB}}$) the leading terms of a fixed Gröbner basis of~$\I(\Gamma)$ (resp.~$\I_{\bA}$,~$\I_{\bB}$), i.e.~$\widehat{\I(\Gamma)}$ is a set of generators of the ideal generated by the leading monomials of elements of~$\I(\Gamma)$. By Remark~\ref{bipartite orientation2}, we can take~$\widehat{\I_{\bA}}:=\widehat{w_{\bA}}$, furthermore since~$\I_{\bA}+\I_{\bB}=\I(\Gamma)$, we choose~$\widehat{\I(\Gamma)}$,~$\widehat{\I_{\bA}}$ and~$\widehat{\I_{\bB}}$ such that: 
\begin{equation}\tag{B0}\label{gb}
\widehat{w_{\bA}}\cup \widehat{w_{\bB}}\subset\widehat{\I_{\bA}}\cup \widehat{\I_{\bB}}\subset \widehat{\I(\Gamma)}, \quad \text{and} \quad \widehat{\I_{\bA}}:=\{X_iX_j\}_{(i,j)\in \bA}, \quad \widehat{w_{\bB}}\subset \widehat{\I_{\bB}}.
\end{equation}

\subsection{Decomposition}\label{firststep}
If~$A$ is a subset of~$E$, we recall that we have~$$\grad(A):=\bigoplus_n [(A\cap E_{n}+ E_{n+1})/E_{n+1}].$$
Furthermore,~$\grad(A)$ is a subset of~$\E$.

Observe that~$\I(\Gamma)\subset \I$. By~\cite[Theorems, Parts~$2.3$ and~$2.4$]{Ufnarovskij1995CombinatorialAA}, the ideal~$\I(\Gamma)$ admits a complementary subspace~$\mathcal{C}_{\Gamma}$ with a monomial basis given by monomials not containing any elements of~$\widehat{\I(\Gamma)}$. By Equation~\eqref{gb}, these monomials do not contain any elements of~$\widehat{w_{\bA}}\cup  \widehat{w_{\bB}}$. 

Furthermore, we denote the gradation on~$\mathcal{C}_{\Gamma}$ by~$\mathcal{C}_{\Gamma}:=\bigoplus_n\mathcal{C}_{\Gamma, n}$. Let us define by~$\mathcal{C}_n$ a complementary subspace of~$\I_n\cap \mathcal{C}_{\Gamma,n}$ in~$\mathcal{C}_{\Gamma,n}$, i.e.\ $\mathcal{C}_{\Gamma,n}=\mathcal{C}_n\bigoplus (\I_n\cap \mathcal{C}_{\Gamma,n})$.
\\Introduce~$\mathcal{C}:=\bigoplus_n \mathcal{C}_n$, this is a complementary subspace of~$\I$ in~$\E$, and every element~$c\in \mathcal{C}_n$ can be uniquely written as~$c=\sum_i c_i$, where~$c_i$ is a monomial of degree~$n$ in~$\mathcal{C}_{\Gamma,n}$. Denote by~$C:=\prod_n \mathcal{C}_n$ and~$C_{\Gamma}:=\prod_n \mathcal{C}_{\Gamma,n}$, these are filtered subsets of~$E$. By~\cite[Chapitre I,~$(2.3.7)$]{LAZ}, we have~$\grad(C)=\mathcal{C}$. 

In the beginning of the proof (first two pages) of~\cite[Theorem~$3.7$]{FOR}, Forré showed that~$C$ is a complementary subspace of~$I$ in~$E$, and for every integer~$n$, we have the following decomposition:
  \begin{equation}\tag{B1}\label{decomposition}
I=CWE+I^{n+1},
\end{equation}
where~$W$ is the~$\F_p$-vector space generated by~$w_{ij}:=\phi(l_{ij}-1)$, for~$(i,j)$ in~$\bA\cup \bB$.

Our goal is to show that~$\I=\I(\Gamma)$. Take~$f\in I$ of degree~$n$, we need to prove that~$\overline{f}$ (which describes a general element in~$\I$) is in~$\I(\Gamma)$. Using Equality \eqref{decomposition}, we can write:
  \begin{equation*}
  \begin{aligned}
f:=\sum_{(i,j)\in \bA}\sum_{k=1}^{n_{ij}}\sum_{l=1}^{n_{ijk}} s_{ijkl}+\sum_{(u,v)\in \bB}\sum_{o=1}^{n_{uv}}\sum_{q=1}^{n_{uvo}} s_{uvoq}+r_{n+1}, \quad \text{where}
\\s_{ijkl}=c_{ijkl}w_{ij} X_{ijk}, \quad s_{uvoq}=c_{uvoq}w_{uv}X_{uvo}, \quad \text{and} \quad r_{n+1} \in I^{n+1};
\end{aligned}
\end{equation*}
for~$c_\bullet$ in~$C$ and~$X_\bullet$ a monomial in~$E$. 

Therefore,
  \begin{equation}\tag{B2} \label{homogenes}
  \begin{aligned} 
f\equiv \sum_{\deg \leq n} s_{ijkl}+ \sum_{\deg \leq n} s_{uvoq} \pmod{E_{n+1}}.
\end{aligned}
\end{equation}

Then, without loss of generalities, we can write as a sum of monomials of degree less or equal than~$n$ in~$C_{\Gamma}$:~$c_{ijkl}:=\sum_{g=1}^{n_{ijkl}}c_{ijklg}, \quad \text{and} \quad
c_{uvoq}:=\sum_{h=1}^{n_{uvoq}}c_{uvoqh}.$
\\From now, we simplify the notations on indices by denoting~$c_{ijkl}$ and~$c_{uvoq}$ as monomials in~$C_{\Gamma}$. Recall by Lemma~\ref{comequa} that we have the following sum of homogeneous polynomials:
$$w_{uv}:=\sum_{r=2}^\infty\sum_{z=0}^r w_{uv rz}, \quad \text{with } w_{uv rz}:=P_{r-2,z}(X_u;X_v)[X_u;X_v],$$ 
where~$w_{uv rz}$ is of degree~$r$.

A natural candidate for~$\overline{f}$ would be~$\sum_{\deg \leq n} c_{ijkl}[X_i;X_j]X_{ijk}+ \sum_{\deg \leq n} c_{uvoq}[X_u;X_v]X_{uvo}$. However, the terms in the previous sums can be of degree strictly less than~$n$. We then work on degree arguments. Especially, we shall study particular leading monomials and so we shall choose special indices, that we will denote by bold letters.

\subsection{Monomial analysis}\label{secondstep}
Similarly to the proof of~\cite[Theorem~$3.7$]{FOR}, we introduce 
$m_{\bA}:=\inf_{ijkl, (i,j)\in \bA}(\deg(s_{ijkl})).$ The goal of the rest of the proof is to show that~$m_{\bA}=n$, then we conclude that this equality allows us to show that~$\overline{f}$ is in~$\I(\Gamma)$.
We argue by contradiction to show that~$m_{\bA}=n$. Assume that~$m_{\bA}<n$, then from Equality \eqref{homogenes}, we infer:
$$\sum_{\deg=m_{\bA}} c_{ijkl}[X_i;X_j] X_{ijk}+ \sum_{\deg=m_{\bA}} c_{uvoq}w_{uv rz} X_{uvo}=0.$$
Furthermore, by definition of~$m_{\bA}$, we can assume for every~$(i,j)$ in~$\bA$ and~$k$ that~$\sum_l c_{ijkl}\neq 0$. Define~$\mu_{\bA}$ and~$\mu_{\bB}$ by
$$\mu_{\bA}:=\sum_{\deg=m_{\bA}} c_{ijkl}[X_i;X_j] X_{ijk}, \quad \text{and} \quad \mu_{\bB}:= \sum_{\deg=m_{\bA}} c_{uvoq}w_{uv rz} X_{uvo}.$$

Before studying the polynomials~$\mu_{\bA}$ and~$\mu_{\bB}$, we bring back some results on strongly and combinatorially free families from~\cite{FOR}. Recall that~$\I_{\bA}$ is the ideal of~$\E$ generated by~$\overline{w_{\bA}}:=\{[X_i;X_j]\}_{(i,j)\in \bA}$, and denote by~$\mathcal{J}_{\bA}$ the ideal of~$\E$ generated by~$\widehat{w_{\bA}}$. Since~$\widehat{w_{\bA}}$ is combinatorially free, we infer that~$\widehat{w_{\bA}}$ is a Gröbner basis of~$\I_{\bA}$, and by~\cite[Theorem~$2.6$]{FOR} the family~$\overline{w_{\bA}}$ is strongly free, i.e. if we denote by~$\E_{\geq 1}$ the augmentation ideal of~$\E$, the~$\E/\I_{\bA}$-module~$\I_{\bA}/\I_{\bA}\E_{\geq 1}$ is free over~$\overline{w_{\bA}}$.  Moreover, by~\cite[Theorem~$2.3$]{FOR}, the family~$\widehat{w_{\bA}}$ is a basis of the free~$\E/\mathcal{J}_{\bA}$-module~$\mathcal{J}_{\bA}/\mathcal{J}_{\bA}\E_{\geq 1}$. 

Let us define~$\mathcal{C}_{\bA}$ the subspace of~$\E$ generated by all monomials not containing any elements of~$\widehat{w_{\bA}}$. By~\cite[Theorems Parts~$2.3$ and~$2.4$]{Ufnarovskij1995CombinatorialAA}, we notice that the~$\F_p$-vector space~$\mathcal{C}_{\bA}$ is both a complementary subspace of~$\I_{\bA}$ and~$\mathcal{J}_{\bA}$. From that fact, we can apply the strategy used in~\cite[Theorem~$3.7$, beginning of the page~$181$]{FOR}.

If~$\mu_{\bB}=0$, then~$\mu_{\bA}=0$. The proof of~\cite[Theorem~$3.7$, beginning page~$181$]{FOR} shows that this case is impossible, since~$\{[X_i;X_j]\}_{(i,j)\in \bA}$ is strongly free and~$c_{ijkl}$ does not contain any monomials in~$\widehat{w_{\bA}}$ so is in~$\mathcal{C}_{\bA}$.  Consequently,~$\mu_{\bB}$ and~$\mu_{\bA}$ are both different from zero. This implies that 
  \begin{equation}\tag{B3} \label{mu equality}
  \begin{aligned}
\widehat{\mu_{\bA}}=\widehat{\mu_{\bB}}\neq 0
\end{aligned}
\end{equation}

We study now the structure of the monomials~$\widehat{\mu_{\bA}}$ and~$\widehat{\mu_{\bB}}$. 
\\ From Remark~\ref{bipartite orientation1}, the family~$\widehat{\I_{\bA}}$ is combinatorially free, then it is strongly free (see~\cite[Theorem~$2.3$]{FOR}). Using a similar argument as~\cite[Beginning of page~$181$]{FOR}, 
we infer that~$\widehat{\mu_{\bA}}=c_{\bi\bj\bk\bl}X_{\bi}X_{\bj}X_{\bi\bj\bk}$ for some fixed coefficients~$(\bi,\bj)\in \bA$ and~$\bk, \bl$. 
Indeed if the previous equality does not hold, there exists a relation of the form:
$$\sum_{ijkl} c_{ijkl}X_iX_j X_{ijk}=0.$$
Since $\{X_1;\dots;X_d\}$ is a $\E$-linearly independant family, we can assume that at least one monomial~$X_{\bi\bj\bk}$ has valuation zero (so is in~$\F_p$), then we obtain a relation:
$$\sum c_{ijkl}X_iX_jX_{ijk} \equiv 0 \pmod{\mathcal{J}_{\bA}\E_{\geq 1}}.$$
Since~$\widehat{w}_{\bA}$ is strongly free, we infer that~$\sum_{l}c_{\bi\bj\bk l}$ is in $\mathcal{C}_{\bA}\cap \J_{\bA}=\{0\}$. This is a contradiction.
Consequently, we can write:
  \begin{equation}\tag{B4} \label{expression muA}
\begin{aligned}
\widehat{\mu_{\bA}}:=M_{\bA}X_{\bi}X_{\bj} X_{\bA}
\end{aligned}
\end{equation}
where~$M_{\bA}:=c_{\bi\bj\bk\bl}$ and~$X_{\bA}:=X_{\bi\bj\bk}$. Observe that~$M_{\bA}$ is a monomial in~$\mathcal{C}_{\Gamma}$, so from~\eqref{gb}, the monomial~$M_{\bA}$ does not contain any monomials in~$\widehat{w_{\bA}}\cup \widehat{w_{\bB}}$.

Recall, from hypothesis, that~$m_{\bA}<n:=\deg(f)$. Let us show that~$\widehat{\mu_{\bB}}$ has the following form:
  \begin{equation}\tag{B5} \label{expression muB}
  \begin{aligned}
\widehat{\mu_{\bB}}:=M_{\bB}X_{\bu}X_{\bv} X_{\bB}
\end{aligned}
\end{equation}
for some fixed~$(\bu,\bv)$ in~$\bB$, some monomial~$X_{\bB}$ and some monomial~$M_{\bB}$ not containing submonomials in~$\widehat{w_{\bA}}\cup \widehat{w_{\bB}}$. From Lemma~\ref{comequa},~$\widehat{\mu_{\bB}}$ has one of the following forms, for some fixed index~$(\bu,\bv)\in \bB$:
$$(a)\quad \widehat{\mu_{\bB}}=c_{\bu\bv\bo\bq}P_{\br-2,\bz}(X_{\bu},X_{\bv})X_{\bu}X_{\bv} X_{\bu\bv\bo}, \quad \text{or} \quad (b) \quad \widehat{\mu_{\bB}}=c_{\bu\bv\bo\bq}P_{\br-2,\bz}(X_{\bu},X_{\bv})X_{\bv}X_{\bu} X_{\bu\bv\bo}.$$

The monomial~$P_{\br-2,\bz}(X_{\bu},X_{\bv})$ contains a monomial of the form~$X_uX_v$ with~$(u,v)\in \bB$ if and only if~$0<\bz<\br-2$. Observe that~$\widehat{\mu_{\bB}}$ has one of the following from:
\begin{itemize}
\item[$(i)$]
the case~$(a)$,
\item[$(ii)$]
the case~$(b)$ with~$0<\bz\leq \br-2$,
\item[$(iii)$]
the case~$(b)$ with~$0=\bz$, but~$c_{\bu\bv\bo\bq}:=XX_u$ where~$X$ is a monomial and~$(u,\bv)\in \bB$,
\item[$(iv)$]
the case~$(b)$ with~$0=\bz$, and~$c_{\bu\bv\bo\bq}$ does not finish by~$X_u$ such that~$(u,\bv)\in \bB$.
\end{itemize}
For the case~$(i)-(iii)$, we always infer a monomial~$M_{\bB}$ not containg a submonomial in~$\widehat{w_{\bA}}\cup \widehat{w_{\bB}}$ such that~$\widehat{\mu}_{\bB}=M_{\bB}X_uX_vX_{\bB}$, so a positive solution to Equation~\eqref{expression muB}. The case $(i)$ is obvious, and for cases $(ii)$ and $(iii)$, we observe that the monomial~$c_{\bu \bv \bo \bq}P_{r-2,z}(X_{\bu}, X_{\bv})X_{\bv}$ contains a monomial $X_{\bu}X_{\bv}$, which allows us to conclude. In the next subpart, we show that the case~$(iv)$ is impossible, which allows us to infer~\eqref{expression muB}.

\subsection{Structure of~$\widehat{\mu_{\bB}}$.}\label{thirdstep}
To conclude, under the hypothesis~$m_{\bA}<n$, we show that the case~$(iv)$ is impossible. By contradiction, we assume that
  \begin{equation*}
  \begin{aligned}
\widehat{\mu_{\bB}}=c_{\bu\bv\bo\bq}X_{\bv}^{\br-2} X_{\bv}X_{\bu} X_{\bu\bv\bo}, \quad \text{for some integer~$\br$, and} 
\\ \text{$c:=c_{\bu\bv\bo\bq}X_{\bv}^{\br-2}$ does not contain a monomial in~$\widehat{w_{\bA}}\cup \widehat{w_{\bB}}$.}
\end{aligned}
\end{equation*}
By Equalities~\eqref{expression muA} and~\eqref{mu equality}, we infer:
$$\widehat{\mu_{\bB}}=cX_{\bv}X_{\bu}X_{\bu\bv\bo}=c_{\bi\bj\bk\bl}X_{\bi}X_{\bj}X_{\bi\bj\bk}.$$
Since~$cX_{\bv}X_{\bu}$ does not contain a monomial in~$\widehat{w_{\bA}}$, we infer that there exists a monomial~$X'_{\bu\bv\bo}$ include in~$c_{\bi\bj\bk\bl}$
such that
$$X_{\bu\bv\bo}=X'_{\bu\bv\bo}X_{\bi}X_{\bj}X_{\bi\bj\bk}.$$
Consider the following restricted sum~$\mu_{\bB}'$ of~$\mu_{\bB}$ where every polynomial of degree~$m_{\bA}$ finishes by~$X_{\bi}X_{\bj}X_{\bi\bj\bk}$, and~$\bi,\bj,\bk$ is fixed from~\eqref{expression muA} (here~$X_{\bi}X_{\bj}X_{\bi\bj\bk}$ is the end of~$\widehat{\mu_{\bA}}$): 
$$\mu_{\bB}'=\sum_{\deg=m_\bA}\sum_{(u,v)\in \bB} \sum_{o,q,r,z}\sum_{X_{uvo}=X_{uvo}'X_{\bi}X_{\bj}X_{\bi\bj\bk}} c_{uvoq}w_{uvrz}X_{uvo},$$ 
This sum is not empty, every term in that sum finishes by~$X_{\bi}X_{\bj}X_{\bi\bj\bk}$, and that sum is in~$\I_{\bB}$: the two-sided ideal of~$\E$ generated by~$\overline{w_{\bB}}$. Observe that~$\widehat{\mu_{\bB}}=\widehat{\mu_{\bB}'}$.

Define~$\mu_{\bB}''$ by~$\mu_{\bB}':=\mu_{\bB}''(X_{\bi}X_{\bj}X_{\bi\bj\bk})$. Notice that~$\mu_{\bB}''$ is in~$\I_{\bB}$. Therefore~$\mu_{\bB}''$ contains a monomial in~$\widehat{\I_{\bB}}$. Furthermore, by definition 
$$\widehat{\mu_{\bB}}=\widehat{\mu_{\bB}'}=\widehat{\mu_{\bB}''}X_{\bi}X_{\bj}X_{\bi\bj\bk}=c_{\bi\bj\bk\bl}X_{\bi}X_{\bj}X_{\bi\bj\bk},$$ 
consequently~$\widehat{\mu_{\bB}''}=c_{\bi\bj\bk\bl}$ is in~$\CC_{\Gamma}$ and therefore by \eqref{gb} does not contain monomials in~$\widehat{\I_{\bB}}$. This is impossible. We studied all cases, so we conclude that~$\widehat{\mu_{\bB}}$ satisfies Equality~\eqref{expression muB}.

\subsection{Conclusion}\label{laststep}
Let us first show that~$m_{\bA}=n$. If~$m_{\bA}<n$, then from Equalities \eqref{mu equality}, \eqref{expression muA} and \eqref{expression muB}, we have:
$$M_{\bA}X_{\bi}X_{\bj}X_{\bA}=M_{\bB}X_{\bu}X_{\bv}X_{\bB}.$$
Therefore,~$M_{\bA}=M_{\bB}$. This is impossible since~$X_{\bi}\neq X_{\bu}$.
We conclude that~$m_{\bA}=n$.

Let us now finish our proof, by showing that~$\overline{f}$ is in~$\I(\Gamma)$. Using Equality \eqref{homogenes}, we have modulo~$E_{n+1}$:
$$f= \sum_{\deg=n,ijkl, (i,j)\in \bA} s_{ijkl}+\sum_{\deg\leq n, uvoq, (u,v)\in \bB}s_{uvoq}.$$
Since~$f$ and~$\sum_{\deg=n} s_{ijkl}$ are both of degree~$n$, then~$\sum_{\deg\leq n}s_{uvoq}$ is at least of degree~$n$, and by Lemma~\ref{monoappro} we have~$\overline{\sum_{\deg\leq n}s_{uvoq}}\in  \I(\Gamma)$. Consequently modulo~$E_{n+1}$, we infer:
  \begin{multline*}
f\equiv  \overline{\sum_{\deg=n} s_{ijkl}+\sum_{\deg\leq n}s_{uvoq}}= \overline{\sum_{\deg=n} s_{ijkl}}+\overline{\sum_{\deg\leq n}s_{uvoq}}
\\ \equiv \sum_{\deg=n} c_{ijkl}[X_i;X_j]X_{ijkl}+\overline{\sum_{\deg\leq n}s_{uvoq}}.
\end{multline*}
Thus~$\overline{f}\in \I_n(\Gamma)$, so~$\I(\Gamma)=\I$.

  \begin{rema}\label{gradgroup2}
In the proof of Theorem~\ref{gradgroup}, we constructed a filtered~$\F_p$-vector space~$C_\Gamma$, and we showed that if~$\I=\I(\Gamma)$, then~$E(G)$ is isomorphic to~$C_{\Gamma}$ as a filtered~$\F_p$-vector space. In fact, we can define an algebra structure on~$C_{\Gamma}$ using the natural surjection~$\phi\colon E\to E(G)$ induced by the Magnus isomorphism and show that~$C_\Gamma$ is indeed isomorphic (as a filtered algebra) to~$E(G)$. 
\end{rema}

  \begin{rema}[Gocha series and filtrations for groups satisfying the Condition~\eqref{condgradgroup}]
We assume that~$G$ admits a presentation which satisfies the Condition \eqref{condgradgroup}. 
The gocha series of~$G$ is given by:
$$gocha(G,t)=\frac{1}{\sum_{k=0}^n (-1)^k c_k(\Gamma)t^k}, \quad \text{and} \quad h^n(G)=c_n(\Gamma), \text{ for every integer~$n$}.$$


Let us denote by~$a_n:=\dim_{\F_p}G_{n}/G_{n+1}$.
Then using~\cite[Theorem~$2.9$]{Minac}, we can explicitly compute~$a_n$ for every integer~$n$. See also~\cite{split} for an equivariant study.
\end{rema}
 
\subsection{Example}\label{basic example}
Let us give an example:

We define~$\Gamma$ a graph with~$6$ vertices and five edges given by~$\bE:=\bA\sqcup \bB$, where~$\bA:=\{(1,2); (1,3)\}$ and~$\bB:=\{(4,5);(4,6);(5,6)\}$. A representation of~$\Gamma$ is given by:

\centering{\begin{tikzcd}
                          &  & 3 &  &  &                           &  & 6            \\
                          &  &   &  &  &                           &  &              \\
1 \arrow[rr] \arrow[rruu] &  & 2 &  &  & 4 \arrow[rr] \arrow[rruu] &  & 5 \arrow[uu]
\end{tikzcd}}

\justifying
Take~$G$ a pro-$p$ group defined by six generators and five relations of the form~$l_{\bA\cup\bB}$ given by: 
  \begin{equation*}
  \begin{aligned}
l_{12}\equiv 1+[X_1;X_2] \pmod{E_3}, \quad \text{and} \quad l_{13}\equiv 1+[X_1;X_3] \pmod{E_3},
\\ l_{45}:=[x_4;x_5], \quad l_{46}:=[x_4;x_6], \quad \text{and} \quad l_{56}:=[x_5;x_6].
\end{aligned}
\end{equation*}
Observe that~$\Gamma$ and the relations~$l_{\bE}$ satisfy the~Condition~\eqref{condgradgroup}.
Therefore, by Theorem~\ref{gradgroup}, the algebra~$\E(G)$ is given by~$\E(\Gamma):=\E/\I(\Gamma)$, where 
$$\I(\Gamma):=\langle [X_1;X_2], [X_1;X_3], [X_4;X_5], [X_4;X_6], [X_5;X_6] \rangle.$$
Furthermore, thanks to Proposition~\ref{compcoho}, that we prove in Part~\ref{lastpart}, we have:
$$h^1(G)=c_1(\Gamma)=6, \quad h^2(G)=c_2(\Gamma)=5, \quad h^3(G)=c_3(\Gamma)=1, \quad \text{else } h^n(G)=c_n(\Gamma)=0.$$
Consequently~$G$ has cohomological dimension~$3$.

\section{Applications to pro-$p$ groups with quadratic presentation}\label{lastpart}
In this part, we begin to prove Proposition~\ref{compcoho}, then we illustrate it with some examples. We say that~$G$ has a \textit{quadratic presentation} if it is presented by a family of quadratic relations~$l:=\{l_i\}$ (i.e.\ $l_i$ is in~$F_2\setminus F_3$).

\subsection{Proof of Proposition~\ref{compcoho}}
I am thankful to Thomas Weigel for the following argument. We also refer to~\cite{weigelspectralsequence} for further details.

Let us denote by~$\Delta_{\bullet}(G)$ the graded algebra indexed by negative integers:~$\Delta_\bullet(G):=\bigoplus_i \Delta_i(G)$ where~$\Delta_i(G):=\E_{-i}(G)$.
Following notations from Theorem~\cite[Theorem~$5.1.12.(2)$]{symonds2000cohomology} and its proof, if the algebra~$\E(G)$ is Koszul then~$\ext_{\Delta_{\bullet}(G)}^{\bullet,\bullet}$ is the quadratic dual of~$\E(G)$ generated by~$X_1,\dots, X_d$ where every~$X_i$ is endowed with bidegree~$(-1,2)$. In particular,~$\ext_{\Delta_{\bullet}(G)}^{s,t}\neq 0$ only if~$t=-2s$.

From Theorem~\cite[Theorem~$5.1.12.(2)$]{symonds2000cohomology}, we infer a spectral sequence~$(E_r^{\bullet,\bullet};d_r)$ and a filtration~$F^\bullet$ on~$H^\bullet(G)$ such that:
\begin{itemize}
\item[$\bullet$]
$E_1^{\bullet,\bullet}=\ext_{\Delta_{\bullet}(G)}^{\bullet,\bullet}(\F_p,\F_p)$,
\item[$\bullet$]~$E_{\infty}^{s,t}=F^sH^{s+t}(G)/F^{s+1}H^{s+t}(G)$.
\end{itemize}
In particular, we have~$d_1=0$, so we infer an isomorphism of graded algebras~$E_1^{\bullet,\bullet}\simeq E_{\infty}^{\bullet,\bullet}$.
The filtration~$F^{\bullet}$ on~$H^\bullet(G)$ is decreasing and from the convergence of the spectral sequence, we obtain:
$$\dots \supset F^{-(n+1)}H^n(G)=H^n(G) \supset F^{-n} H^n(G)=H^n(G)\supset F^{-(n-1)}H^n(G)=0 \dots$$
Consequently, we infer the following isomorphism of graded algebras:
$$H^\bullet(G;\F_p)\simeq \ext_{\E(G)}^\bullet(\F_p;\F_p).$$

\begin{rema}
We propose an alternative proof, using Serre's Lemma~\cite[Partie~$5$, Lemme~$2.1$]{LAZ}, of the fact that we have an isomorphism of graded vetor spaces between~$H^\bullet(G;\F_p)$ and~$\ext_{\E(G)}^\bullet(\F_p;\F_p)$. 

Let~$\PP:=(\PP_i, \delta_i)$ be a Koszul resolution of~$\F_p$, then there exists a~$E(G)$-free resolution~$P:=(P_i,d_i)$ of~$\F_p$ such that~$\grad(P):=(\grad(P_i), \grad(d_i))=\PP$, i.e.\ for every~$i$,~$\grad(P_i)=\PP_i$ and~$\grad(d_i)=\delta_i$. Moreover, there exits a family~$p_{i,j}$ in~$P_i$ such that 
$$P_i:=\prod_j p_{i,j}E(G) \quad \text{and} \quad \PP_i:=\prod_j \overline{p_{i,j}}\E(G).$$
Since~$P_i$ (resp.\ $\PP_i$) is a free compact~$E(G)$-module (resp.\ graded~$\E(G)$-module), we infer two isomorphisms of discrete~$\F_p$-vector spaces:
$$Hom_{E(G)}(P_i;\F_p)\simeq \bigoplus_j p_{ij}^\ast\F_p, \quad \text{and} \quad Hom_{\E(G)}(\PP_i;\F_p)\simeq \bigoplus_j \overline{p_{ij}}^\ast\F_p,$$
where~$p_{ij}^\ast$ (resp.\ $\overline{p_{ij}}^\ast$) is the function which maps~$\sum_l p_{il}e_l\in P_i$ with~$e_l\in E(G)$ (resp.\ $\sum_l\overline{p_{il}}f_l\in \PP_i$, with~$f_l\in \E(G)$) to~$\epsilon(e_j)$ (resp.\ $\epsilon(f_{j})$), for~$\epsilon$ the augmentation map of~$E(G)$ (or~$\E(G)$).

Define by~$gr\colon Hom_{E(G)}(P_i;\F_p)\to Hom_{\E(G)}(\PP_i;\F_p)$ the morphism of~$\F_p$-vector spaces which maps~$p_{ij}^\ast$ to~$\overline{p_{ij}}^\ast$. We infer the following diagram of discrete~$\F_p$-vector spaces:

\centering{
  \begin{tikzcd}
Hom_{E(G)}(P_{i+1};\F_p) \arrow[d,"gr "] && Hom_{E(G)}(P_i;\F_p) \arrow[ll,"d_{i+1}^\ast "] \arrow[d,"gr "] && Hom_{E(G)}(P_{i-1};\F_p) \arrow[d,"gr"] \arrow[ll, "d_{i}^\ast "] \\
Hom_{\E(G)}(\PP_{i+1};\F_p)        &  & Hom_{\E(G)}(\PP_i;\F_p) \arrow[ll,"\delta_{i+1}^\ast "]         &  & Hom_{\E(G)}(\PP_{i-1};\F_p) \arrow[ll,"\delta_{i}^\ast"]           
\end{tikzcd}
}

\justifying Observe that the previous diagram is in general not commutative. Since the resolution~$\PP$ is Koszul, we show that the previous diagram is indeed commutative. More precisely, we show that for every~$i$, the map~$d_i^\ast$ is zero.

Since~$d_i$ is a filtered morphism, we can write~$d_i(p_{i,l}):=\sum_{m} p_{i-1,m}\sum_{k=1}^d \alpha_{k,m}X_k +c_{i,l}$ with~$c_{i,l}$ an element of degree strictly larger than~$i$ in~$P_{i-1}$, and~$c_{i,l}:=\sum_m p_{i-1,m}u_m$. In particular,~$\epsilon(u_m)=0$. Consequently, we have:
\begin{equation*}
\begin{aligned}
d_i^\ast(p_{i-1,j}^\ast)(p_{i,l})&=p_{i-1,j}^\ast\circ d_i(p_{i,l})
							   \\&=p_{i-1,j}^\ast\left(\sum_{m} p_{i-1,m}\sum_{k=1}^d \alpha_{k,m}X_k +c_{i,l}\right)
							   \\&=p_{i-1,j}^\ast\left(\sum_{m} p_{i-1,m}(\sum_{k=1}^d \alpha_{k,m}X_k +u_m)\right)
							   \\&=\epsilon(\alpha_{k,j}X_k +u_j)
							   \\&=0,
\end{aligned}
\end{equation*}
therefore we have~$d_i^{\ast}=0$.
\end{rema}

\subsection{Free pro-$p$ groups}
Assume that~$G$ is a free pro-$p$ group, then by the Magnus isomorphism, we infer~$\E(G)\simeq \E$. Using Proposition~\ref{compcoho}, we obtain the well known result:
$$H^\bullet(G)\simeq \ext^\bullet_{\E}(\F_p;\F_p), \text{ so} \quad H^n(G)=0 , \text{ for } n \geq 2.$$
Thus $H^{\bullet}(G)$ is generated in degree one.

\subsection{Mild quadratic pro-$p$ group}
In this subsubsection, we slightly improve~\cite[Theorem~$1.3$]{minavc2022mild}.

From~\cite[Theorem~$3.7$]{FOR}, if~$G$ has a mild quadratic presentation, then~$\E(G)$ is a quadratic algebra. In fact, in the proof of~\cite[Theorem~$1.3$]{minavc2022mild}, Min{\'a}{\v{c}}-Pasini-Quadrelli-T{\^a}n showed that the algebra~$\E(G)$ is Koszul. Denote its quadratic dual by~$\A(G)$ (see for instance~\cite[Chapter~$1$, Part~$2$]{polishchuk2005quadratic} for more details).

  \begin{coro}
Assume that~$G$ has a mild quadratic presentation. Then~$H^\bullet(G)$ and~$\E(G)$ are both quadratic algebras. Furthermore, we have:
$$H^\bullet(G)\simeq \A(G).$$
\end{coro}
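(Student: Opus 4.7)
The plan is to chain together three facts already established (or cited) in the paper: the quadraticity of $\E(G)$ under a mild quadratic presentation, the Koszul property of $\E(G)$ in that same setting, and Proposition~\ref{compcoho} which turns Koszulness into a cohomology computation.

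First, I would invoke Forré's result \cite[Theorem~$3.7$]{FOR}: if $G$ admits a mild presentation by relations $\{l_i\}$ with $\deg(w_i) = 2$ for each $i$, then the ideal $\I \subset \E$ defining $\E(G) = \E/\I$ is generated by the homogeneous quadratic elements $\overline{w_i}$. This shows that $\E(G)$ is a quadratic algebra in the standard sense (generated in degree~$1$ with defining relations in degree~$2$). Second, following the proof of \cite[Theorem~$1.3$]{minavc2022mild} by Min{\'a}{\v{c}}-Pasini-Quadrelli-T{\^a}n, the algebra $\E(G)$ is Koszul: the mild condition ensures that the Hilbert series factorization $gocha(G,t) = (1 - dt + rt^2)^{-1}$ matches the one predicted for the Koszul dual of a quadratic algebra with $d$ generators and $r$ quadratic relations, and the strongly free property of $\{\overline{w_i}\}$ (a consequence of mildness) produces the linear free resolution characterizing Koszulness.

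Third, with $\E(G)$ now known to be both quadratic and Koszul, I would apply Proposition~\ref{compcoho} to obtain the isomorphism of graded algebras
\[
H^\bullet(G) \simeq \ext^\bullet_{\E(G)}(\F_p; \F_p).
\]
The general theory of Koszul algebras (see \cite[Chapter~$2$]{polishchuk2005quadratic}) identifies $\ext^\bullet_{\E(G)}(\F_p; \F_p)$ with the quadratic dual $\A(G)$ of $\E(G)$, since for a Koszul quadratic algebra the Ext-algebra is generated in cohomological degree~$1$ with quadratic relations dual to those of $\E(G)$. Combining these two isomorphisms yields $H^\bullet(G) \simeq \A(G)$.

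Finally, since $\A(G)$ is by construction a quadratic algebra (the quadratic dual of a quadratic algebra is quadratic), the isomorphism $H^\bullet(G) \simeq \A(G)$ forces $H^\bullet(G)$ to be quadratic as well. There is no substantive obstacle in this proof: all the hard work has already been done in Forré's strongly free analysis, in Mináč-Pasini-Quadrelli-Tân's Koszulness argument, and in Proposition~\ref{compcoho}; the present corollary is essentially a bookkeeping statement that assembles these three ingredients. The only mild subtlety to flag is that the earlier discussion in the paper already observed the implication ``mild quadratic $\Rightarrow$ $\E(G)$ Koszul'' and that $H^\bullet(G)$ is the quadratic dual \emph{provided} it is itself known to be quadratic; the point of Proposition~\ref{compcoho} is precisely to remove this latter hypothesis, so invoking it completes the statement of \cite[Theorem~$1.3$]{minavc2022mild}.
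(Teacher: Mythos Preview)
Your proposal is correct and follows essentially the same approach as the paper's proof: invoke Forr\'e's result and the Min\'a\v{c}--Pasini--Quadrelli--T\^an observation to get that $\E(G)$ is quadratic and Koszul, apply Proposition~\ref{compcoho} to obtain $H^\bullet(G)\simeq \ext^\bullet_{\E(G)}(\F_p;\F_p)$, and then use the standard Koszul duality identification $\ext^\bullet_{\E(G)}(\F_p;\F_p)\simeq \A(G)$. The paper's proof is terser (it takes the quadraticity and Koszulness of $\E(G)$ as already recorded in the preceding paragraph), but the logical content is identical.
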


  \begin{proof}
Since~$\E(G)$ is Koszul, we can apply Proposition~\ref{compcoho}. We infer~$$H^\bullet(G)\simeq \ext_{\E(G)}^\bullet(\F_p;\F_p).$$ Furthermore~$\ext_{\E(G)}^\bullet(\F_p;\F_p)\simeq \A(G)$.
Consequently:
$$H^\bullet(G)\simeq \ext^\bullet_{\E(G)}(\F_p;\F_p)\simeq \A(G).$$
\end{proof}

\subsection{Pro-$p$ Right Angled Artin Groups}
Let us recall that $\Gamma$ is an undirected graph with set of vertices~$\{1;\dots;d\}$, and set of unidrected edges denoted by~$\bE:=\{(i,j)\}$. We say that~$G_\Gamma$ is a Right Angled Artin Group (RAAG) if~$G_\Gamma$ admits a presentation~$\mathcal{F}/S_\Gamma$ where~$\mathcal{F}$ is the abstract free group on~$\{x_1;\dots;x_d\}$ and~$S_\Gamma$ is a normal subgroup of~$\mathcal{F}$ generated by~$[x_i;x_j]$ for~$(i,j)\in \bE$. 

We say that~$G(\Gamma)$ is pro-$p$ RAAG if~$G(\Gamma)$ is the pro-$p$ completion of~$G_\Gamma$. The pro-$p$ group~$G(\Gamma)$ admits a presentation~$F/R_\Gamma$ where~$F$ is a free pro-$p$ group over~$\{x_1;\dots;x_d\}$ and~$R_\Gamma$ is a closed normal subgroup of~$F$ generated by ~$[x_i;x_j]$ for~$(i,j)\in \bE$. 

The algebra~$H^\bullet(G(\Gamma))$ is already known. Lorensen~\cite[Theorem~$2.7$]{lorensen2010groups} showed that~$$H^\bullet(G(\Gamma))\simeq H^\bullet(G_\Gamma).$$
It is also well-known, see~\cite{bartholdi2020right}, that~$H^\bullet(G_\Gamma)\simeq \A(\Gamma)$. Consequently

  \begin{theo}\label{propcoho}
Let~$G(\Gamma)$ be pro-$p$ RAAG, then we have the following isomorphism:
$$H^\bullet(G(\Gamma))\simeq \A(\Gamma).$$
\end{theo}
We propose another proof of Theorem~\ref{propcoho}.

  \begin{prop}\label{propcoho2}
Let~$G$ be a pro-$p$ RAAG with underlying graph~$\Gamma$, then we have~$E(G)=E(\Gamma)$. Therefore, we infer:
$$\E(G)\simeq \E(\Gamma), \quad \text{and} \quad H^\bullet(G(\Gamma)) \simeq \A(\Gamma).$$
\end{prop}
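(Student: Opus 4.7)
The plan is to identify the kernel of the natural surjection $E(F) \twoheadrightarrow E(G)$ with $I(\Gamma)$ via the Magnus isomorphism, and then to deduce the rest of the proposition by chaining together results already proved in the excerpt. First, I would recall the standard description of the completed group algebra of a quotient: if $G(\Gamma) = F/R_\Gamma$ where $R_\Gamma$ is the closed normal subgroup generated by $\{[x_i;x_j]\}_{(i,j)\in \bE}$, then $E(G) \simeq E(F)/J$, where $J$ is the closed two-sided ideal of $E(F)$ generated by $\{[x_i;x_j] - 1\}_{(i,j)\in \bE}$. Under the Magnus isomorphism $\phi \colon E(F) \simeq E$ of~\eqref{Magnus iso}, the ideal $J$ corresponds to the closed two-sided ideal $\Delta$ of $E$ generated by $\{w_{ij} := \phi([x_i;x_j] - 1)\}_{(i,j)\in \bE}$.

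Next, I would invoke Proposition~\ref{commideals}, which asserts precisely that $\Delta = I(\Gamma)$. This immediately yields the first statement $E(G) \simeq E/I(\Gamma) = E(\Gamma)$. Applying the exact functor $\grad$ and using Lemma~\ref{monoappro}, which states $\grad(E(\Gamma)) = \E(\Gamma)$, I obtain
\[
\E(G) = \grad(E(G)) \simeq \grad(E(\Gamma)) = \E(\Gamma) = \E/\I(\Gamma).
\]
Since $\E(\Gamma)$ is a Koszul algebra (as recalled in Part~\ref{Part0}, following~\cite{bartholdi2020right}), Proposition~\ref{compcoho} applies and gives
\[
H^\bullet(G(\Gamma)) \simeq \ext^\bullet_{\E(G)}(\F_p;\F_p),
\]
which, being the quadratic dual of the Koszul algebra $\E(\Gamma)$, coincides with $\A(\Gamma)$.

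There is no serious obstacle here: the proof is essentially a direct concatenation of Proposition~\ref{commideals}, Lemma~\ref{monoappro}, and Proposition~\ref{compcoho}. The only subtlety worth spelling out explicitly is the standard identification of the kernel of $E(F) \twoheadrightarrow E(G)$ with the closed two-sided ideal generated by $\{l-1\}$ as $l$ ranges over a set of normal generators of $R_\Gamma$, since it is exactly this identification that allows Proposition~\ref{commideals} to be brought to bear. Once this is recorded, the three displayed identifications chain together without further computation.
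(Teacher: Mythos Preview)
Your proof is correct and follows exactly the paper's own argument: identify the kernel ideal $I$ with $\Delta$ via the standard description of $E(F)\twoheadrightarrow E(G)$, apply Proposition~\ref{commideals} to get $E(G)=E(\Gamma)$, then use Lemma~\ref{monoappro} for $\E(G)=\E(\Gamma)$ and Proposition~\ref{compcoho} together with Koszulity of $\E(\Gamma)$ for the cohomology statement. The only difference is that you spell out the identification $I=\Delta$ explicitly, whereas the paper simply asserts it.
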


  \begin{proof}
Here, we just need to observe, following notations of Proposition~\ref{commideals}, that~$I=\Delta$. Then we infer, using Proposition~\ref{commideals}, that~$E(G)=E(\Gamma)$. From Lemma~\ref{monoappro} and Proposition~\ref{commideals}, we conclude that~$\E(G)=\E(\Gamma)$.

Consequently,~$\E(G)$ is quadratic and Koszul. We finish the proof using Proposition~\ref{compcoho}.
\end{proof}

  \begin{rema}
Observe that the~$\F_p$-vector space~$C_{\Gamma}$ constructed in Subpart~\ref{gradgroup2} does depend only on~$\Gamma$. In particular, using Remark~\ref{gradgroup2} and Proposition~\ref{propcoho2}, we conclude that the filtered vector space~$C_{\Gamma}$ is isomorphic to the filtered vector space~$E(\Gamma)$.
\end{rema}

\subsection{Prescribed and restricted ramification}
We finish this paper by showing a more precise version (and a proof) of Theorem~\ref{prescribed cohomological dimension}:

  \begin{theo}[Galois extensions with prescribed ramification and cohomology]\label{prescribed cohomological dimension2}
Fix~$\Gamma$ and~$l_{\bE}$ satisfying the Condition \eqref{condgradgroup}. Then, there exist a totally imaginary field~$K$ and a set~$T$ of primes in~$K$ such that~$G_K^T$, the Galois group of the maximal pro-$p$ extension of~$K$ unramified outside~$p$ and which totally splits in~$T$, is presented by set of generators~$\{x_1;\dots;x_d\}$ and set of relations~$l_{\bA}$ satisfying the Condition~\eqref{condgradgroup}.
 
Furthermore, there exist a quotient~$G$ of~$G_K^T$ satisfying~$\E(G)\simeq \E(\Gamma)$.
In particular,~$H^\bullet(G)\simeq \A(\Gamma)$. 
\end{theo}

  \begin{proof}
Take~$k:=\Q(\sqrt{-p})$ and define~$k_p$ the maximal~$p$-extension of~$k$ unramified outside places above~$p$ in~$k$. 
From~\cite[Proof of Corollary~$4.6$]{maire2023galois} we observe that~$p$ is coprime to the class number of~$k$. Consequently, from~\cite[Theorems~$11.5$ and~$11.8$]{Koch} we infer that~$G_k:=\Gal(k_p/k)$ is a free pro-$p$ group with~$2$ generators. 

Let~$F$ be an open subgroup of~$G_k$ with index~$|G_k:F|$ larger than~$d$. Then using the Schreier formula (see~\cite[Theorem~$3.3.16$]{NSW}), we infer that the group~$F$ is pro-$p$ free with~$d':=1+|G_k:F|$  generators. Let~$K$ be the fixed subfield of~$k_p$ by~$F$. Observe that~$K_p$, the maximal~$p$-extension of~$K$ unramified outside places in~$K$ above~$p$, is equal to~$k_p$, and so~$F:=\Gal(k_p/K)=G_K$. We define~$V':=[\![d+1;d']\!]$.

By the Chebotarev Density Theorem (see for instance~\cite[Part~$2$]{split}), there exists a set of primes~$T:=\{p_{ij}, p_v\}_{(i,j)\in \bA, v\in V'}$ in~$K$ with Frobenius elements~$\sigma_{ij}$ (resp.~$\sigma_v$) in~$F$ conjugated to an element~$l_{ij}$ (resp.~$l_v$) in~$F$ satisfying~$l_{ij}\equiv [x_i;x_j]\pmod{F_3}$ (resp.~$l_v\equiv x_v \pmod{F_3}$. Define~$R_{\bA}$ the normal closed subgroup of~$G_K$ generated by~$l_{\bA}$ and~$l_{V'}$, then we infer~$G_K^T=G_K/R_{\bA}$, which is mild presented by generators~$\{x_1;\dots;x_d\}$ and set of relations~$l_{\bA}$ satisfying the Condition~\eqref{condgradgroup}. Introduce~$K_p^T$ the maximal Galois subextension of~$K_p$ which totally splits in~$T$. 

Define~$R_{\bB}$ the closed normal subgroup of~$G_K^T$ generated by images of~$l_{\bB}:=\{l_{uv}:=[x_u;x_v]; (u,v)\in \bB\}$, and~$K(\Gamma)$ the fixed subfield of~$K_p^T$ by~$R_{\bB}$. Then a presentation of~$G:=\Gal(K(\Gamma)/K)$ is given by~$F/R$, where~$F$ is the free pro-$p$ group generated by~$\{x_1;\dots;x_d\}$ and~$R$ is the closed normal subgroup of~$F$ generated by the family~$l_{\bE}$. 


Since~$l_{\bE}$ satisfies the Condition \eqref{condgradgroup}, using Theorem~\ref{gradgroup}, we infer that~$$\E(G)\simeq \E(\Gamma).$$
Using Proposition~\ref{compcoho}, we conclude that:
$$H^\bullet(G)\simeq \A(\Gamma).$$
\end{proof}

\begin{coro}\label{final coro}
For every integer~$d\geq 5$, there exists a totally imaginary field~$K$ and a non-trivial set~$T$ of primes in~$K$ such that the Galois group~$G_K^T$:
\begin{itemize}
\item[$(i)$] is mild on~$d$ generators,
\item[$(ii)$] admits a non-analytic quotient~$G$ with~$d$ generators and cohomological dimension~$d-2$.
\end{itemize}
\end{coro}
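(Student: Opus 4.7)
The plan is to derive this corollary directly from Theorem~\ref{prescribed cohomological dimension2}; the only real work is to choose a graph $\Gamma$ producing cohomological dimension exactly $n$.

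For $n \geq 2$, I would take $\Gamma_\bB := K_n$, the complete graph on $n$ vertices, whose clique number is $n_{\Gamma_\bB} = n$. For $\Gamma_\bA$ I would pick any bipartite graph with enough vertices so that the total number of vertices $d$ of $\Gamma := \Gamma_\bA \sqcup \Gamma_\bB$ strictly exceeds $n$; a single edge on two vertices suffices, giving $d = n+2$. Then I would take the tautological quadratic relations $l_{ij} := [x_i;x_j]$ for every $\{i,j\}\in \bA \cup \bB$; these satisfy Condition~\eqref{condgradgroup} trivially. Applying Theorem~\ref{prescribed cohomological dimension2} yields a totally imaginary number field $K$, a set $T$ of primes of $K$ such that $G_K^T$ is mild on $d$ generators (which establishes $(i)$, since $d > n$), together with a quotient $G$ of $G_K^T$ satisfying $H^\bullet(G) \simeq \A(\Gamma)$. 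The cohomological dimension of $G$ is then the clique number of $\Gamma$, equal to $\max(n_{\Gamma_\bA}, n_{\Gamma_\bB}) = n$ since $\Gamma_\bA$ is bipartite (so its clique number is at most $2$) and $n \geq 2$; this is precisely $(ii)$.

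The cases $n \in \{0,1\}$ may be handled ad hoc: for $n=1$, take $\Gamma_\bA$ a discrete graph on $d \geq 2$ vertices and $\Gamma_\bB = \emptyset$, so that the theorem produces a free pro-$p$ group $G_K^T$ of rank $d$, and set $G := G_K^T$; the case $n=0$ is trivial. I do not anticipate any substantive obstacle, since all the heavy lifting is already in Theorem~\ref{prescribed cohomological dimension2}; the only point requiring care is ensuring $d > n$, which amounts to inflating $\Gamma_\bA$ with enough auxiliary vertices.
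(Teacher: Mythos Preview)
Your proposal is correct and follows essentially the same approach as the paper: apply Theorem~\ref{prescribed cohomological dimension2} with $\Gamma_{\bB}$ the complete graph on $n$ vertices and a small bipartite $\Gamma_{\bA}$, then handle $n=1$ separately via a free $G_K^T$. The only cosmetic differences are that the paper uses a path on three vertices for $\Gamma_{\bA}$ (so $d=n+3$) rather than your single edge (so $d=n+2$), and it treats $n=2$ by taking $\Gamma=\Gamma_{\bA}$ alone rather than folding it into the general case.
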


\begin{proof}
We fix~$\Gamma_{\bA}$ a graph with~$2$ vertices~$\{1;2\}$ and one edge:~$\{(1,2)\}$. The graph~$\Gamma_{\bA}$ is bipartite.

If~$d\geq 5$, take~$\Gamma:=\Gamma_{\bA}\sqcup \Gamma_{\bB}$, where~$\Gamma_{\bB}$ is the complete graph on~$d-2$ vertices. The graph~$\Gamma$ satisfies the Condition~\eqref{condgradgroup}, has clique number~$d-2$, and admits~$d$ vertices. Then~$\E(\Gamma)$ has cohomological dimension~$d-2$ and admits~$d$ generators. We conclude with Theorem~\ref{prescribed cohomological dimension2}.
\end{proof}

\section*{Data availability statement}
Data sharing is not applicable to this article as no datasets were generated or
analysed.

\bibliography{bibactbib}
\bibliographystyle{plain}
\end{document}